\theoremstyle{plain}
  \newtheorem{theorem}{Theorem}[section]
  \newtheorem{proposition}[theorem]{Proposition}
  \newtheorem{lemma}[theorem]{Lemma}
  \newtheorem{corollary}[theorem]{Corollary}
\theoremstyle{definition}
  \newtheorem{definition}[theorem]{Definition}
  \newtheorem{example}[theorem]{Example}
\theoremstyle{remark}
  \newtheorem{remark}[theorem]{Remark}
\numberwithin{equation}{section}
\def\umapright#1{\smash{
   \mathop{\longrightarrow}\limits^{#1}}}
\def\rmapdown#1{\Big\downarrow\rlap
   {$\vcenter{\hbox{$\scriptstyle#1$}}$}}
\def\tempbaselines
\def\diagram#1{\null\,\vcenter{\tempbaselines
\mathsurround=0pt
    \ialign{\hfil$##$\hfil&&\quad\hfil$##$\hfil\crcr
      \mathstrut\crcr\noalign{\kern-\baselineskip}
  #1\crcr\mathstrut\crcr\noalign{\kern-\baselineskip}}}\,}
\def\pullback#1&#2&#3&#4&#5&#6&#7&#8&{
\diagram{#1&\umapright{#2}&#3\cr
\rmapdown{#4}&&\rmapdown{#5}\cr
#6&\umapright{#7}&#8\cr}}
\def\calC{{\mathcal C}}
\def\calD{{\mathcal D}}
\def\calE{{\mathcal E}}
\def\calK{{\mathcal K}}
\def\calJ{{\mathcal J}}
\def\calO{{\mathcal O}}
\def\calP{{\mathcal P}}
\def\calR{{\mathcal R}}
\def\calS{{\mathcal S}}
\def\calT{{\mathcal T}}
\def\frakF{{\mathfrak F}}
\def\frakG{{\mathfrak G}}
\def\Aut{\mathop{\rm Aut}\nolimits} 
\def\coh{{\mathop{\rm coh}\nolimits}}
\def\colim{\mathop{\varprojlim}\nolimits}
\def\Hom{\mathop{\rm Hom}\nolimits} 
\def\Nat{\mathop{\rm Nat}\nolimits} 
\def\lim{\mathop{\varinjlim}\nolimits}
\def\Ob{\mathop{\rm Ob}\nolimits} 
\def\Mor{\mathop{\rm Mor}\nolimits}
\def\Res{\mathop{\rm Res}\nolimits}
\def\PSh{\mathop{\rm PSh}\nolimits}
\def\Sh{\mathop{\rm Sh}\nolimits}
\def\PG{{\calP\rtimes G}}
\def\GG{\mathbf{G}}
\def\ZZ{{\Bbb Z}}
\def\Ab{{\rm Ab}}
\def\Set{{\rm Set}}
\DeclareMathOperator{\rSet}{{\rm Set}-}
\DeclareMathOperator{\OG}{\mathbf{O}\it{G}}
\DeclareMathOperator{\pG}{\mathbf{P}\it{G}}
\DeclareMathOperator{\TG}{\mathbf{T}\it{G}}
\DeclareMathOperator{\C}{\mathbf{C}}
\DeclareMathOperator{\D}{\mathbf{D}}
\DeclareMathOperator{\rMod}{Mod-}
\DeclareMathOperator{\rmod}{mod-}
\begin{document}

\title[Sheaves in finite group representations]
{On sheaves in finite group representations}

\author{Tengfei Xiong}
\author{Fei Xu}
\email{18tfxiong@stu.edu.cn}
\email{fxu@stu.edu.cn}
\address{Department of Mathematics\\
Shantou University\\
Shantou, Guangdong 515063, China}

\subjclass[2020]{20C05, 18F10, 18A25, 18F20}

\keywords{Group representation, sheaf, $G$-set, orbit category, transporter category, (co)continuous functor, morphism of topoi, modules on sites}

\thanks{The authors \begin{CJK*}{UTF8}{}
\CJKtilde \CJKfamily{gbsn}(熊腾飞、徐 斐)
\end{CJK*} are supported by the NSFC grants No. 12171297 and No. 11671245.}

\begin{abstract}
Given a general finite group $G$, we consider several categories built on it, their Grothendieck topologies and resulting sheaf categories. For a certain class of transporter categories and their quotients, equipped with atomic topology, we explicitly compute their sheaf categories via sheafification. This enables us to identify $G$-representations with various fixed-point sheaves. As a consequence, it provides an intrinsic new proof to the equivalence of M. Artin between the category of sheaves on the orbit category and that of group representations.
\end{abstract}

\maketitle

Let $G$ be a finite group. The category $\rSet{G}$ of (right) $G$-sets and $G$-equivariant maps has been a major subject whenever group actions occur. The orbit category $\calO(G)$ is a full subcategory, consisting of the objects $H\backslash G$, for all subgroups $H\subset G$. In group representations, both categories are very useful in that they can be used to introduce Mackey functors \cite{We1}, to  study Alperin's weight conjecture \cite{Li1, We3}, and to investigate subgroup complexes and homology decompositions \cite{AKO, Sm}. 

In the above cases, functors over various categories are of fundamental importance. Following a classical result (recalled here as Theorem 1.6), of M. Artin \cite[0.6 \& 0.6 bis]{Ar} and \cite[II.1.9]{Mi}, we shall consider several Grothendieck topologies on the categories $\rSet{G}, \calO(G)$, and $G$, as well as $\calT(G)$ (\textit{the transporter category}). Artin's result asserts that a certain abelian sheaf category on $\rSet{G}$ is equivalent to the module category of $\ZZ G$. By Mac Lane and Moerdijk \cite[III.9]{MM}, $\rSet{G}$ may be replaced by the finite category $\calO(G)$. Thus representations of $G$ are identified with sheaves on $\calO(G)$. Furthermore the equivalence remains correct if the coefficient ring is changed from $\ZZ$ to $R$, an arbitrary commutative ring with identity. Under the circumstance, an $R$-representation of $G$, identified with a sheaf of $R$-modules on $\calO(G)$, glues up various $R(N_G(H)/H)$-modules. The phenomenon can be thought as a way of decomposing group representations, analogous to the homology decomposition of the classifying space $BG$ via $\calO(G)$ \cite{AKO, DH, Sm}. This asks for a better understanding of Artin's result, in the context of finite group representations. Indeed, the recent work of Balmer \cite{Ba} indicates that sheaf theoretic methods may find more applications in this area, apart from the well-known Deligne-Lusztig theory. As an attempt, we try to extract more information from Theorem 1.6. 

Take a finite $G$-poset $\calP$ and consider the abstract transporter category $\PG$. Whenever $\calP$ has an initial object, it is possible to introduce the atomic topology on $\PG$ and on an arbitrary quotient category $\calC$. We are able to compute the sheaf category over $\calC$, by sheafifying presheaves. Two natural (continuous and/or cocontinuous) functors between $G, \PG$ and $\calC$ will induce equivalences among their sheaf categories. Based on these, we obtain far more general statements, see Theorems 3.8 and 4.4, than Artin's original one, where $\calP$ is taken to be the poset of all subgroups of $G$. It allows further applications of sheaf-theoretical methods to finite group representations.

The paper is organized as follows. In Section 1, we recall the definitions of a site and of a sheaf on a site, and the result of Artin (especially a version of Mac Lane-Moerdijk), accompanied by motivating examples. Then in Section 2 we state the concepts of continuous and cocontinuous functors, for comparing sheaf categories. Through a certain class of abstract transporter categories, we establish an Artin-type equivalence for sheaf categories over many sites in Section 3. Finally based on the intrinsic properties of (ringed) topoi, we continue to deduce some interesting consequences on linear representations. Our key results are still correct for finitely generated modules (in terms of coherent sheaves).\\

\noindent{\textbf{Acknowledgements}} The authors would like to thank the anonymous referee whose comments significantly improve the presentation of the paper.

\section{Sheaves and group representations}

We begin with basics of sheaves of sets on sites. Then we will continue to discuss sheaves with algebraic structures, and their applications to linear representations. This is heavy machinery, but we try to make it comprehensible for the reader.  Our main references on topos and sheaf theories are \cite{SGA4, St, Jo, MM}. However they use quite different notations and terminologies. We try to follow \cite{St, MM}, but choices and adjustments have to be made in order to keep consistency.

Modules will be \textit{right} modules in this paper, if not otherwise specified.

\subsection{Sites and sheaves} Let $\calC$ be a small category. We assume the reader is familiar with the concept of a functor category. For consistency, we shall call a contravariant functor on $\calC$ a \textit{presheaf} (also called a \textit{representation} of $\calC$, see \cite{We1}). We denote by $\PSh(\calC)$ the category of presheaves from $\calC$ to $\Set$. If $R$ is a commutative ring with identity, we write $\PSh(\calC,R)$ for the category of presheaves of $R$-modules (contravariant functors from $\calC$ to $\rMod{R}$). If $R=\ZZ$, this is often dubbed as $\PSh(\calC,\ZZ)=\PSh(\calC,\Ab)$. If $G$ is a group, regarded as a category with one object, $\PSh(G)$ and $\PSh(G,R)$ are canonically identified the the categories $\rSet{G}$ and $\rMod{RG}$, respectively.

One may put a \textit{Grothendieck topology} $\calJ_{\calC}$ on $\calC$, see \cite{MM}. 

By definition a \textit{sieve} $S$ on $x\in\Ob\calC$ is simply a subfunctor of $\Hom_{\calC}(-,x)$. In fact a sieve $S$ on $x$ is identified with a set (also written as $S$) of morphisms with codomain $x$, satisfying the condition that if ${\sf u}\in S$ and ${\sf uv}$ exists then ${\sf uv}\in S$. For instance the \textit{maximal sieve} $\Hom_{\calC}(-,x)$ on $x$ is given by the set of all morphisms with codomain $x$.

\begin{definition} Let $\calC$ be a small category. A \textit{Grothendieck topology} on $\calC$ is a function $\calJ_{\calC}$ which assigns to each object $x\in\Ob\calC$ a class of sieves $\calJ_{\calC}(x)$ on $x$, in such a way that
\begin{enumerate}
\item the maximal sieve $\Hom_{\calC}(-,x)$ is in $\calJ_{\calC}(x)$;
\item if $S\in\calJ_{\calC}(x)$, then ${\sf u}^*(S)=\{{\sf v}\bigm{|} {\sf uv}\in S\}$ lies in $\calJ_{\calC}(y)$ for any ${\sf u}:y\to x$;
\item if $S_1\in\calJ_{\calC}(x)$ and $S_2$ is any sieve on $x$ such that ${\sf u}^*(S_2)\in\calJ_{\calC}(y)$ for all ${\sf u}: y \to x$ in $S_1$, then $S_2\in\calJ_{\calC}(x)$.
\end{enumerate}
\end{definition}

Any sieve in $\calJ_{\calC}(x)$ is called a \textit{covering sieve} on $x$. A small category $\calC$ equipped with a Grothendieck topology $\calJ_{\calC}$ is called a \textit{site} $\C=(\calC,\calJ_{\calC})$ (the subscript is often omitted when there is no confusion).

Roughly speaking, a sheaf of sets on $\calC$ is a presheaf of sets on $\calC$, satisfying certain glueing properties mandated by $\calJ_{\calC}$. A concise definition of a sheaf is the following.

\begin{definition}
A presheaf $\frakF\in\PSh(\calC)$ is a ($\calJ$-)sheaf if, for every $x\in\Ob\calC$ and every $S\in\calJ(x)$, the inclusion $S\hookrightarrow\Hom_{\calC}(-,x)$ induces an isomorphism
$$
\Nat(\Hom_{\calC}(-,x),\frakF){\buildrel{\cong}\over{\to}}\Nat(S,\frakF).
$$
The category of sheaves of sets on $\C=(\calC,\calJ)$ is the full subcategory of $\PSh(\calC)$, consisting all all ($\calJ$-)sheaves, denoted by $\Sh(\C)$.
\end{definition}

Note that by Yoneda Lemma, $\frakF(x)\cong\Nat(\Hom_{\calC}(-,x),\frakF)$. 

In the literature on Grothendieck topologies, many authors ask for the existence of \textit{fibre products} in $\calC$, see for example \cite{Mi}. Although this condition is (essentially) met in our situation, we do not make the assumption, as we want to work with general finite categories.

\subsection{Examples and motivations}

We list some motivating examples, and state the result of M. Artin mentioned earlier.

\begin{example}
Given any category $\calC$, one can define the \textit{trivial topology} $\calJ_{triv}$ such that $\calJ_{triv}(x)=\{\Hom_{\calC}(-,x)\}, \forall x\in\Ob\calC$. Since this topology does not impose any extra condition, according to Definition 1.2, all presheaves are sheaves. 

A more interesting example is the \textit{atomic topology}, \cite[Section III.2]{MM}. This needs a mild assumption on $\calC$ : the Ore condition, weaker than having fibre products, which says that any two morphisms $y\to x$ and $z\to x$, in $\Mor\calC$, can be completed to a commutative square
$$
\xymatrix{w \ar[d] \ar[r] & z \ar[d]\\
y \ar[r] & x }
$$ 
and which is met by the categories we consider here. Then we put 
$$
\calJ_{at}(x)=\{\mbox{all non-empty sieves on}\ x\}, \forall x\in\Ob\calC.
$$
A sieve $S$, as a subfunctor of $\Hom_{\calC}(-,x)$, is non-empty if there is at least one object $y$ such that $S(y)\ne\emptyset$.
\end{example}

In general, $\Sh(\C)$ is a proper full subcategory of $\PSh(\calC)$, see Proposition 3.5. However if $\calC$ is given the trivial topology $\calJ_{triv}$, then $\Sh(\C)=\PSh(\calC)$. To some extent, the Grothendieck topologies allow us to add a new variance $\calJ$ on a category $\calC$ and to specify finer structural information on its representations.

\begin{remark}
One can continue to consider sheaves with algebraic structures, such as sheaves of $R$-modules for a given ring $R$ and sheaves of rings. By definition, a presheaf of $R$-modules is a sheaf if and only if, considered as a presheaf of sets, it is a sheaf. Sheaves of $R$-modules on a site $\C$ form a category $\Sh(\C,R)$, see \cite[7.44]{St}, which is abelian. This category has enough injectives, but usually not enough projectives.

As our strategy, we will first prove results about topoi (categories of sheaves of sets). Then there are ways to obtain corresponding statements about sheaves with algebraic structures \cite[7.43 \& 18.13]{St}.
\end{remark}

Since we are interested in group representations, we convert module categories into sheaf categories.

\begin{example} Let $G$ be a group, regarded as a category with one object $\bullet$ . Then $\PSh(G)$ is identified with $\rSet{G}$.

The atomic topology on $G$ is $\calJ_{at}(\bullet)=\{\Hom_G(-,\bullet)\}=\{G\}=\calJ_{triv}(\bullet)$. The resulting site is written as $\GG=(G,\calJ_{at})=(G,\calJ_{triv})$. Under the circumstance, every presheaf on $\GG$ is a sheaf, and we have $\Sh(\GG)=\PSh(G)$. It follows immediately that, for a given coefficient ring $R$, $\Sh(\GG, R)=\rMod{RG}$. Thus group representation theory is a sheaf theory.
\end{example}

Deligne-Lusztig theory successfully applies sheaf theory and \'etale cohomology theory to the representatiton theory of finite groups of Lie type. The more recent work of Balmer \cite{Ba} demonstrates a fuller scope of sheaf theory, aiming at broader applications to representations of general finite groups. Our work is originally prompted by Balmer's, but eventually pinned down to the following theorem.

\begin{theorem}[Artin, Mac Lane-Moerdijk] Let $G$ be a finite group and $\calO(G)$ be the orbit category. Then, on the site $\OG=(\calO(G),\calJ_{at})$,
$$
\Sh(\OG)\simeq\rSet{G}.
$$
Consequently, $\Sh(\OG,R)\simeq\rMod{RG}$, where $R$ is a commutative ring with identity.
\end{theorem}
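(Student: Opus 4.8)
The plan is to realise the equivalence through the fixed-point functor. Write $\iota\colon\calO(G)\hookrightarrow\rSet{G}$ for the inclusion of the transitive $G$-sets, and define $\Phi\colon\rSet{G}\to\PSh(\calO(G))$ by $\Phi(X)=\Hom_{\rSet{G}}(\iota(-),X)$; via $f\mapsto f(H1)$ one identifies $\Phi(X)(H\backslash G)$ with the fixed-point set $X^{H}$. In the reverse direction, let $1$ denote the trivial subgroup; the endomorphisms of $1\backslash G=G$ in $\calO(G)$ are the left translations $\ell_{g}\colon x\mapsto gx$, $g\in G$, so $\End_{\calO(G)}(1\backslash G)\cong G$ and $\frakF(1\backslash G)$ carries a canonical right $G$-action, yielding a functor $\Psi\colon\frakF\mapsto\frakF(1\backslash G)$ into $\rSet{G}$. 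I would then prove: (a) $\Phi$ factors through $\Sh(\OG)$; (b) $\Psi\Phi\cong\Id_{\rSet{G}}$; (c) $\Phi\Psi\cong\Id_{\Sh(\OG)}$. Part (b) is immediate and uses no sheaf condition, since $\Psi\Phi(X)=\Hom_{\rSet{G}}(G,X)\cong X$ as right $G$-sets.

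For (a) I would first pin down the atomic topology by a basis. Granting the Ore condition on $\calO(G)$ (noted in Example 1.4; concretely, any orbit of a fibre product $K\backslash G\times_{H\backslash G}L\backslash G$ already surjects onto both factors, because the image of a transitive $G$-set is a nonempty $G$-subset and $K\backslash G$ has no proper one), the singleton families $\{\,f\colon y\to x\,\}$ form a basis generating $\calJ_{at}$; hence $\frakF\in\PSh(\calO(G))$ is an atomic sheaf exactly when, for every morphism $f\colon y\to x$, the map $\frakF(f)\colon\frakF(x)\to\frakF(y)$ is a bijection onto $\{\,s\in\frakF(y)\mid\frakF(g_{1})(s)=\frakF(g_{2})(s)\text{ whenever }fg_{1}=fg_{2}\,\}$. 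Now take $f\colon K\backslash G\to H\backslash G$ and $\frakF=\Phi(X)$. The map $\Phi(X)(f)$ is precomposition with the surjection $f$, hence injective, so it suffices that its image exhaust the displayed set. But a morphism $g\colon 1\backslash G=G\to K\backslash G$ is determined by its value at the identity coset, which may be any point $c$ of $K\backslash G$; under this correspondence $fg_{1}=fg_{2}$ becomes $f(c_{1})=f(c_{2})$ and $\frakF(g_{1})(s)=\frakF(g_{2})(s)$ becomes $\tilde s(c_{1})=\tilde s(c_{2})$, writing $\tilde s\colon K\backslash G\to X$ for $s\in X^{K}$. Thus if $s$ lies in the displayed set, $\tilde s$ is constant on the fibres of $f$ and so factors through $f$, i.e.\ lies in the image of $\Phi(X)(f)$. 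This gives (a).

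For (c), fix an atomic sheaf $\frakF$ and put $X=\Psi(\frakF)=\frakF(1\backslash G)$. The quotient $q_{H}\colon 1\backslash G=G\to H\backslash G$ satisfies $q_{H}\circ\ell_{h}=q_{H}$ for $h\in H$, so $\frakF(q_{H})$ carries $\frakF(H\backslash G)$ into $\frakF(G)^{H}=X^{H}=\Phi(\Psi(\frakF))(H\backslash G)$. Applying the criterion above with $f=q_{H}$ --- and noting that every morphism of $\calO(G)$ into $1\backslash G=G$ is some $\ell_{a}$ --- one sees (by specialising to $a=1$ and using functoriality) that the matching condition ``$\frakF(\ell_{a_{1}})(s)=\frakF(\ell_{a_{2}})(s)$ whenever $Ha_{1}=Ha_{2}$'' is equivalent to $s\in\frakF(G)^{H}$; hence $\frakF(q_{H})$ is a bijection of $\frakF(H\backslash G)$ onto $X^{H}$. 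Naturality in $H\backslash G$ comes down to the identity $\varphi\circ q_{K}=q_{H}\circ\ell_{g}$, valid for a morphism $\varphi\colon K\backslash G\to H\backslash G$ sending the identity coset to $Hg$. Combined with (b), this shows $\Psi$ and $\Phi$ are mutually inverse equivalences between $\Sh(\OG)$ and $\rSet{G}$.

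The displayed consequence is then formal: $\Phi$ preserves finite products (products in $\PSh(\calO(G))$ are objectwise, and each $\Hom_{\rSet{G}}(\iota(x),-)$ preserves them), so it sends $R$-module objects to $R$-module objects; as a sheaf of $R$-modules is exactly an $R$-module object of the ambient topos, the equivalence $\Phi$ induces an equivalence $\Sh(\GG,R)=\rMod{RG}\xrightarrow{\sim}\Sh(\OG,R)$ --- this is the passage from topoi to ringed topoi recalled in Remark 1.7 (see \cite[7.43 \& 18.13]{St}). One could instead derive the set-level statement from the Comparison Lemma of \cite{SGA4}, $\calO(G)$ being a dense generating subcategory of the topos $\rSet{G}=\Sh(\GG)$ on which the atomic topology is the one induced by the canonical topology. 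In either route the genuinely delicate point is the reduction above: that the single free orbit $1\backslash G=G$, together with its endomorphism monoid $\cong G$, already detects all the gluing data, so the atomic sheaf condition on $\calO(G)$ need only be probed by the group elements. Everything else is bookkeeping with fixed points.
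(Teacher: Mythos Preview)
Your argument is correct and uses precisely the two functors the paper sketches---$M\mapsto\frakF_M=\Hom_G(-,M)$ and $\frakF\mapsto\frakF(1\backslash G)$---together with the Comparison Lemma alternative the paper mentions immediately after the proof. The only difference is that you actually carry out the verifications (that $\frakF_M$ is an atomic sheaf, and that $\frakF(q_H)$ identifies $\frakF(H\backslash G)$ with $\frakF(G)^H$ for any sheaf $\frakF$), whereas the paper's proof of Theorem~1.5 is deliberately a bare sketch deferring these checks to \cite{Ar} and \cite[III.9]{MM}; the paper's own contribution is the alternative route through $\calT(G)$ developed later in Section~3.
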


\begin{proof} For future reference, we sketch the constructions of the two-way functors. For full details, see for instance \cite[I.0]{Ar} and \cite[III.9]{MM}. 

If $M$ is a $G$-set, one defines a presheaf $\frakF_M$ such that $\frakF_M(H\backslash G)=\Hom_G(H\backslash G, M)\cong M^H$. Then one can continue to verify it is a sheaf. Conversely, if $\frakF$ is a sheaf, $\frakF(1\backslash G)$ is naturally a $G$-set.
\end{proof}

\begin{remark} 
In \cite{Ar, MM, Mi}, the authors mainly dealt with (infinite) topological groups, continuous $G$-sets, and discrete $G$-modules. Their version applies to any topological group and to more general orbit categories containing a cofinal family of orbits.
\end{remark}

Let $\mathbf{B}G$ (not the classifying space $BG$) be the site of $\rSet{G}$ with the canonical topology. The above theorem is originally proved for $\mathbf{B}G$ \cite{Ar}, that is, $\Sh(\mathbf{B}G)\simeq\rSet{G}$. The present form is found in \cite[III.9]{MM}. In fact, the canonical topology restricts to the atomic topology on $\calO(G)$ (written as $\mathbf{S}(G)$ there). By the Comparison Lemma \cite[Appendix 4.3]{MM}, one has $\Sh(\mathbf{B}G)\simeq\Sh(\OG)$. Another way to understand this reduction is that $\rSet{G}$ is the \textit{additive extension} of $\calO(G)$, in the sense that every $G$-set is a disjoint union of objects of $\calO(G)$, up to isomorphism.  Thus every presheaf (resp. sheaf) on $\rSet{G}$ restricts to a presheaf (resp. sheaf) on $\calO(G)$. While on the other hand, every presheaf (resp. sheaf) on $\calO(G)$ extends to a presheaf (resp. sheaf) on $\rSet{G}$.

We prefer to work with $\calO(G)$ because it is finite. The category of presheaves of $R$-modules is equivalent to the module category of $R\calO(G)$, the orbit category algebra \cite{AKO, Li1, We3}, which is finite-dimensional. The sheaf $\frakF_M$ is also called the \textit{fixed-point functor or (pre)sheaf}, which is a Mackey functor \cite{We1} and which is key to the Ronan-Smith theory \cite[Ch. 10]{Sm}, when $M$ is a finitely generated $RG$-module. 

\section{Morphisms of sites and topoi} 

Although the category equivalence in Theorem 1.5 is neatly presented, it is not coming from functors between $\calO(G)$ and $G$. For better understanding, we insert a third category, the \textit{transporter category} $\calT(G)$, and demonstrate that the equivalence is truly induced by functors among $\calO(G), \calT(G)$ and $G$. Thus the equivalence can be considered as a generalized restriction and/or a generalized induction, in the sense of \cite{Xu1}. To this end, we need to compare sheaf categories.

For future reference, we recall some fundamental concepts and constructions, found in many places, see for instance \cite{St, KS}. Given a (covariant) functor $\alpha:\calC\to\calD$, there exists a \textit{restriction} along $\alpha$, $\Res_{\alpha} : \PSh(\calD)\to\PSh(\calC)$ which is given by the precomposition with $\alpha$. This functor admits two adjoint functors, the left and right \textit{Kan extensions} along $\alpha$, written as $LK_{\alpha}, RK_{\alpha} : \PSh(\calC)\to\PSh(\calD)$. Let $d\in\Ob\calD$. Then the \textit{comma category} (or \textit{category under} $d$, or just \textit{undercategory}) $d/\alpha$ has objects $\{({\sf t},x)\bigm{|} {\sf t}: d\to\alpha(x), x\in\Ob\calC\}$. A morphism $({\sf t},x)\to({\sf t}',x')$ is given by ${\sf u}:x\to x'$ such that ${\sf t}'=\alpha({\sf u}){\sf t}$. Comma categories (including undercategories and overcategories) are used to define the Kan extensions.

To compare sheaves on two sites, we need the concepts of continuous and cocontinuous functors \cite{St, Jo}. Recall that given a set of morphisms $\{{\sf u}_i\}_{i\in I}$ with common codoamin $x\in\Ob\calC$, the sieve $S$ generated by $\{{\sf u}_i\}_{i\in I}$ is the smallest sieve that contains these ${\sf u}_i$'s, written as $S=({\sf u}_i | i\in I)$.

\begin{definition} Let $\C=(\calC,\calJ_{\calC})$ and $\D=(\calD,\calJ_{\calD})$ be two sites. A functor $\alpha: \calC\to\calD$ is called \textit{continuous} if, for every $x\in\Ob\calC$ and every $S_x\in\calJ_{\calC}(x)$, $\alpha(S_x)$ generates a covering sieve on $\alpha(x)$ and, for every $d\in\Ob\calD$, $(d/\alpha)^{op}$ is filtered. 

A functor $\beta : \calC\to\calD$ is called \textit{cocontinuous} if for every $x\in\Ob\calC$ and every covering sieve $S_{\beta(x)}\in\calJ_{\calD}(\beta(x))$ on $\beta(x)\in\Ob\calD$, there exists a covering sieve $S_x\in\calJ_{\calC}(x)$ with $\beta(S_x)\subset S_{\beta(x)}$. (This is equivalent to saying that $\{{\sf u}\in\Hom_{\calC}(-,x)\bigm{|}\beta({\sf u})\in S_{\beta(x)}\}$ is a covering sieve on $x$.)
\end{definition}

The original definition of continuity simply asks $\Res_{\alpha}$ to preserve sheaves \cite{SGA4}. Our definition is taken from \cite[C2.3.7]{Jo}, see \cite[C2.3.1]{Jo} and \cite[7.13.1]{St} for a version based on fibre products. The first condition on continuity is called \textit{cover-preserving} and the second is said to be \textit{flat}. A cocontinuous functor is also called \textit{cover-reflecting}. The definition of cocontinuity is \cite[Page 574]{Jo}, see also \cite[7.19.1]{St}. For those who worry about fibre products, see Remark 3.2 for the relevant constructions for the sites we have in mind. The following is \cite[7.14.1]{St}, see also \cite[C2.3.1]{Jo}

\begin{definition} Let $\C=(\calC,\calJ_{\calC})$ and $\D=(\calD,\calJ_{\calD})$ be two sites. A \textit{morphism of sites} $\Lambda : \D\to\C$ is given by a continuous functor $\alpha : \calC\to\calD$ such that $LK_{\alpha}^{\sharp} : \Sh(\C)\to\Sh(\D)$ is exact.
\end{definition}

Here for a sheaf $\frakG$ on $\C$, $LK_{\alpha}^{\sharp}\frakG:=(LK_{\alpha}\frakG)^{\sharp}$, where $(-)^\sharp$ means \textit{sheafification}. See Proposition 3.5 where we recall the construction of a sheafification in the proof. The sheafification functor is exact and is left adjoint to the forgetful functor $\Sh(\C)\to\PSh(\calC)$, which is left exact. 

The following is \cite[7.15.1]{St}, see also \cite[A4.1.1]{Jo} and \cite[VII.1]{MM}.

\begin{definition} A \textit{(Grothendieck) topos} is a category that is equivalent to some $\Sh(\C)$, a sheaf category of sets on a site $\C$. A \textit{(geometric) morphism of topoi} 
$$
\Psi=(\Psi^{-1},\Psi_*) : \Sh(\D)\to\Sh(\C)
$$ 
consists of a pair of functors $\Psi_* : \Sh(\D)\to\Sh(\C)$ and $\Psi^{-1}:\Sh(\C)\to\Sh(\D)$ such that $\Psi^{-1}$ is left exact and is left adjoint to $\Psi_*$.
\end{definition}

A morphism of sites $\Lambda : \D\to\C$ always induces a morphism of topoi $\Lambda=(\Lambda^{-1},\Lambda_*)$, where $\Lambda_*=\Res_{\alpha} : \Sh(\D)\to\Sh(\C)$ and $\Lambda^{-1}=LK_{\alpha}^{\sharp} : \Sh(\C)\to\Sh(\D)$. There is a broader concept of an \textit{elementary topos} \cite{Jo, MM}. We only work with Grothendieck topoi in this paper.

\begin{remark}
By \cite[7.21.1]{St} and \cite[C.2.3.18]{Jo}, if $\beta : \calC\to\calD$ is cocontinuous, then $RK_{\beta}\frakF\in\Sh(\D)$, for any $\frakF\in\Sh(\C)$. Moreover $\Res_{\beta}^{\sharp} : \Sh(\D)\to\Sh(\C)$ is exact and left adjoint to $RK_{\beta}$. The cocontinuous functor $\beta :\calC\to\calD$ also gives rise to a morphism of topoi
$$
\Phi=(\Phi^{-1},\Phi_*) : \Sh(\C) \to \Sh(\D),
$$
where $\Phi_*=RK_{\beta}$ and $\Phi^{-1}=\Res_{\beta}^{\sharp}$. 
\end{remark}

For future applications, we record the following (\cite[7.21.5]{St}, also see the comments before \cite[C.2.3.23]{Jo}).

\begin{lemma} Let $\C$ and $\D$ be two sites. If there is a continuous and cocontinuous functor $\beta:\calC\to\calD$, for the associated morphism of topoi $\Phi=(\Phi^{-1},\Phi_*) : \Sh(\C)\to\Sh(\D)$, we have
\begin{enumerate}
\item $\Phi^{-1}=\Res_{\beta}$,
\item $\Phi_!=LK_{\beta}^{\sharp}$ is left adjoint to $\Phi^{-1}$.
\end{enumerate}
\end{lemma}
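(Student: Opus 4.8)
The plan is to start from the morphism of topoi that Remark 2.5 already attaches to the cocontinuous functor $\beta$, and then to use the additional hypothesis of continuity to simplify and extend it. Recall from Remark 2.5 that cocontinuity of $\beta$ gives a morphism of topoi with $\Phi_*=RK_\beta$ and $\Phi^{-1}=\Res_\beta^{\sharp}$, where $\Phi^{-1}$ is exact and left adjoint to $\Phi_*$. The single extra ingredient I would bring in is that, since $\beta$ is moreover continuous, the restriction functor $\Res_\beta\colon\PSh(\calD)\to\PSh(\calC)$ carries sheaves to sheaves; this is exactly the original SGA4 description of a continuous functor, which in our setting agrees with Definition 2.1, so $\Res_\beta$ already restricts to a functor $\Sh(\D)\to\Sh(\C)$.

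For (1): given $\frakF\in\Sh(\D)$, continuity yields $\Res_\beta\frakF\in\Sh(\C)$. The unit $\frakG\to\frakG^{\sharp}$ of the sheafification--inclusion adjunction is an isomorphism whenever $\frakG$ is already a sheaf, so applying it to the presheaf $\Res_\beta\frakF$ produces a natural isomorphism $\Res_\beta\frakF\cong(\Res_\beta\frakF)^{\sharp}=\Res_\beta^{\sharp}\frakF$. Hence $\Phi^{-1}=\Res_\beta^{\sharp}\cong\Res_\beta$ as functors $\Sh(\D)\to\Sh(\C)$; the restriction of a sheaf along $\beta$ needs no sheafification.

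For (2): I would simply chain three adjunctions. At the presheaf level $LK_\beta\colon\PSh(\calC)\to\PSh(\calD)$ is left adjoint to $\Res_\beta$; sheafification $(-)^{\sharp}$ is left adjoint to the inclusion $\Sh(\D)\hookrightarrow\PSh(\calD)$; and $\Sh(\C)$ is a full subcategory of $\PSh(\calC)$. Combining these with part (1), one gets for every $\frakG\in\Sh(\C)$ and $\frakF\in\Sh(\D)$ natural isomorphisms
$$
\Hom_{\Sh(\D)}\big(LK_\beta^{\sharp}\frakG,\,\frakF\big)\;\cong\;\Hom_{\PSh(\calD)}\big(LK_\beta\frakG,\,\frakF\big)\;\cong\;\Hom_{\PSh(\calC)}\big(\frakG,\,\Res_\beta\frakF\big)\;=\;\Hom_{\Sh(\C)}\big(\frakG,\,\Phi^{-1}\frakF\big),
$$
where the last equality uses fullness of $\Sh(\C)\subset\PSh(\calC)$ and the identification $\Res_\beta=\Phi^{-1}$ from (1). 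Since $LK_\beta^{\sharp}\frakG=(LK_\beta\frakG)^{\sharp}$ is a sheaf on $\D$ by construction, $\Phi_!:=LK_\beta^{\sharp}$ is a well-defined functor $\Sh(\C)\to\Sh(\D)$, and the display shows it is left adjoint to $\Phi^{-1}$.

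I do not expect a genuine obstacle: the lemma is a formal consequence of the definitions together with the equivalence of the two formulations of continuity recorded just after Definition 2.1. The only point needing a little care is verifying that the composite bijection of hom-sets above is natural in both $\frakG$ and $\frakF$, which is inherited from the naturality of each of the three constituent adjunction isomorphisms. As a consistency check, combining (1) and (2) with Remark 2.5 shows that a functor $\beta$ which is both continuous and cocontinuous produces an adjoint triple $\Phi_!\dashv\Phi^{-1}\dashv\Phi_*$ relating $\Sh(\C)$ and $\Sh(\D)$.
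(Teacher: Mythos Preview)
Your argument is correct and is essentially the standard one: continuity guarantees that $\Res_{\beta}$ already lands in $\Sh(\C)$, whence $\Phi^{-1}=\Res_{\beta}^{\sharp}$ collapses to $\Res_{\beta}$, and then the chain $(-)^{\sharp}\dashv\iota$ together with $LK_{\beta}\dashv\Res_{\beta}$ yields the left adjoint $\Phi_!$. The only cosmetic remark is that the paper's Definition~2.1 packages cover-preservation together with flatness; it is the cover-preserving half alone that forces $\Res_{\beta}$ to preserve sheaves (the flatness is what makes $LK_{\beta}^{\sharp}$ exact, hence gives the morphism of sites in the other direction), so your appeal to ``the SGA4 description'' is using only part of the hypothesis, which is fine.

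As for comparison: the paper does not actually prove this lemma. It is recorded with a citation to \cite[7.20.5]{St} and \cite[C.2.3.23]{Jo}, so there is no in-house argument to compare against. Your write-up is precisely the unpacking of those references, and the concluding observation that one obtains an adjoint triple $\Phi_!\dashv\Phi^{-1}\dashv\Phi_*$ is a useful sanity check that the paper leaves implicit.
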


Many results on topoi may be readily applied to sheaves with algebraic structures. This is vital for investigating linear representations. Once again, from \cite[7.43]{St} the aforementioned constructions and properties we know about sheaves of sets sustain on sheaves of abelian groups, $R$-modules etc., in the sense that the morphism $\Psi$ of topoi gives rise to a pair of adjoint functors $(\Psi^{-1},\Psi_*)$ on categories of sheaves with suitable algebraic structures.

\section{The transporter category} 

A transporter category is a bridge between a group and various local categories \cite{Xu1}. We shall see this connection leads us to the aforementioned Theorem 1.6, in a straightforward way. On these categories (equipped with atomic topology), we can carry on explicit calculations, which eventually become cornerstones of our main results. 

Let $\calP$ be a $G$-poset. For consistency, we put the $G$-action on the right. The group action permutes both $\Ob\calP$ and $\Mor\calP$. The morphisms in $\calP$ will be written as $\iota^y_x$ if $x\le y$. Given an object $x\in\Ob\calP$ and a morphism $\iota^y_x\in\Mor\calP$, we denote by $x^g$ and $(\iota^y_x)^g=\iota^{y^g}_{x^g}$ their images under the action by $g\in G$.

By definition, the (abstract) \textit{transporter category} $\PG$, constructed on a $G$-category $\calP$, is a special kind of \textit{Grothendieck construction} and has the same objects as $\calP$. Its morphisms are the formal products $\iota^y_xg:x^{g^{-1}}\to y$, where $\iota^y_x\in\Mor\calP$ and $g\in G$. For convenience, we rewrite morphisms in the form $\iota^y_{x^g} g: x \to y$. If $\iota^z_{w^h}h : w \to z$ is another morphism, then $(\iota^y_{x^g}g)(\iota^z_{w^h}h)$ exists in $\Mor\PG$ if and only if $\iota^y_{x^g}(\iota^z_{w^h})^g$ exists in $\calP$. The latter is equivalent to saying that $x=z$. The composite is $(\iota^y_{x^g}g)(\iota^x_{w^h}h):=\iota^y_{w^{hg}}(hg)$. Since $\iota^y_{x^g} g: x\to y$ itself factorizes as $(\iota^y_{x^g}1)(\iota^{x^g}_{x^g}g)$ by $\iota^{x^g}_{x^g}g:x\to x^g$ and $\iota^y_{x^g}1: x^g\to y$, $\iota^y_{x^g} g$ may be understood as a ``conjugation'' followed by an ``inclusion''.

Note that $\calP\rtimes G$ does not have to meet the Ore condition. Think of a poset with at least two minimal objects.

\begin{example} Let $H$ be a subgroup of $G$. The group $G$ acts on $H\backslash G$ by right multiplication. One can check that $(H\backslash G)\rtimes G$ is a connected groupoid whose skeleton can be identified with the subgroup $H$. Thus $(H\backslash G)\rtimes G\simeq H$. 

For a general $G$-poset $\calP$, $\PG$ can be regarded as a generalized subgroup of $G$, see \cite{Xu1}.
\end{example}

When $\calP=\calS(G)$ is the poset of all subgroups of $G$, equipped with the conjugation action, $\calT(G)=\calS(G)\rtimes G$ is called the (complete) transporter category. In fact, a morphism set can be characterized by
$$
\Hom_{\calT(G)}(H,K)=\{\iota^K_{H^g}g : H \to K \bigm{|} g^{-1}Hg\subset K\},
$$
where $H^g=g^{-1}Hg$. The orbit category $\calO(G)$ is a quotient category of $\calT(G)$, in the sense that there is a functor $\rho : \calT(G)\to\calO(G)$ with $\rho(H)=H\backslash G$ (bijection on objects) and $\rho(\iota^K_{H^g}g)=c_{g^{-1}}$ (surjection on morphism sets), where $c_{g^{-1}}: H\backslash G\to K\backslash G$ is given by $H\mapsto Kg^{-1}$. One easily checks that 
$$
\Hom_{\calO(G)}(H\backslash G, K\backslash G)\cong K\backslash\Hom_{\calT(G)}(H, K).
$$

\begin{remark}
The category $\calT(G)$ has fibre products, and thus satisfies the Ore condition. If $\iota^L_{H^g}g : H\to L$ and $\iota^L_{K^{g'}}g': K\to L$ are two morphisms in the transporter category, then the fibre product is simply $H^g\cap K^{g'}$ since by definition these two morphisms are injective homomorphisms. We put $\TG=(\calT(G),\calJ_{at})$.

A description of the fibre products in $\rSet{G}$ can be found in \cite[2.1]{We1}. Although the fibre products do not always exist in $\calO(G)$, it will not affect our upcoming discussions because of the equivalence $\Sh(\OG)\simeq\Sh(\mathbf{B}G)$. Here $\mathbf{B}G$ stands for the site on $\rSet{G}$ with canonical topology.

Finally in the single-object category $G$, one has the fibre product $\bullet\times_{\bullet}\bullet=\bullet$.
\end{remark}

Every transporter category $\PG$ admits a natural functor to $G$. The functor $\pi$ send every objects to $\bullet$ and each morphism ${\sf u}g$ to $g$. Meanwhile, a transporter category is frequently encoded into a category extension \cite{We2, Xu1}. By definition, a category extension (generalizing the notion of a group extension) is a sequence of two functors
$$
\calK {\buildrel{i}\over{\to}} \calE {\buildrel{\rho}\over{\to}} \calC
$$
such that
\begin{enumerate}
\item three categories have the same objects, $i$ and $\rho$ are identities on objects, $i$ is injective on morphisms and $\rho$ is surjective on objects,

\item whenever there are ${\sf u, v}\in\Mor\calE$ with $\rho({\sf u})=\rho({\sf v})$, there is a unique ${\sf w}\in\Mor\calK$ satisfying ${\sf u}\iota({\sf w})={\sf v}$.
\end{enumerate}
It implies that $\calK=\coprod_{x\in\Ob\calC}\calK(x)$, where, for each $x \in \Ob\calC$, $\calK(x)$ is a category whose only object is $x$ and whose morphisms are those mapped to $1_x$ by $\rho i$. Morphisms of $\calK(x)$ must be invertible and thus $\calK(x)$ is essentially a group.  If $\calK(x)$ is always trivial, then $\calC\cong\PG$.

Suppose we have a category extension $\calK \to \PG \to \calC$, fitting into the following picture
$$
\xymatrix{ & & \calK \ar[dl]_{i}\\
&\PG \ar[dl]_{\rho} \ar[dr]^{\pi}&\\
\calC && G}
$$
where $\calK(x)\subset\Aut_{\PG}(x)$ is a subgroup, for each $x \in \Ob\PG=\Ob\calC$, and $\calK=\coprod_{x\in\Ob\calC}\calK(x)$.

These two functors induce several pairs of adjoint functors
$$
\xymatrix{& \PSh(\PG) \ar@/_/[dl]_{RK_{\rho},LK_{\rho}} \ar@/^/[dr]^{LK_{\pi}, RK_{\pi}}&\\
\PSh(\calC) \ar@/_/[ur]_{\Res_{\rho}} && \PSh(G)
\ar@/^/[ul]^{\Res_{\pi}}.}
$$
We already know $\PSh(G)=\rSet{G}$. Given a $G$-set $M$, the restriction along $\pi$ sends it to the constant presheaf $\kappa_M=\Res_{\pi}(M)\in\PSh(\PG)$. Be aware that $\kappa_M$ is only constant on objects, not constant on morphisms (which are induced by the $G$-action on $M$). The Kan extensions are computed in \cite{Xu1}. The left and right Kan extensions along $\pi$ are isomorphic to $\lim_{\calP}$ and $\colim_{\calP}$, respectively. Meanwhile the right Kan extension along $\rho$ is given by $(RK_{\rho}\mathfrak{F})(x)=\mathfrak{F}(x)^{\calK(x)}$, $\forall x\in\Ob\calC$ and $\mathfrak{F}\in\PSh(\PG)$.

We want to deduce an analogous picture for sheaf categories. To this end, we shall classify the sheaves on a certain type of sites defined over $\PG$. The following key lemma will allow us to compute the sheafification of a presheaf.

\begin{lemma} Let $\calP$ be a $G$-poset with an initial object. Then $\calP\rtimes G$ satisfies the Ore condition and can be given the atomic topology. Meanwhile on each $x\in\Ob\calP\rtimes G$, there is a unique minimal non-empty sieve.
\end{lemma}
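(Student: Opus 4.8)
The plan is to unpack the Ore condition and the sieve structure directly from the explicit description of morphisms in $\PG$. Write $*$ for the initial object of $\calP$, so $\Hom_{\calP}(*,x)$ is a singleton $\{\iota^x_*\}$ for every $x$. First I would verify the Ore condition: given two morphisms $\iota^x_{{}^gw}g : w\to x$ and $\iota^x_{{}^{h}z}h : z\to x$ in $\PG$, I want a common ``lower bound''. Using $*$, I have the morphism $\iota^w_* : *\to w$ in $\calP$, hence $\iota^w_* 1 : *\to w$ in $\PG$, and similarly $\iota^z_* 1 : *\to z$. I would then check that $(\iota^x_{{}^gw}g)(\iota^w_* 1)$ and $(\iota^x_{{}^hz}h)(\iota^z_* 1)$ are both morphisms $*\to x$ in $\PG$, and the point is to see they can be arranged to agree (or, more precisely, that the square
$$
\xymatrix{* \ar[d] \ar[r] & z \ar[d]\\ w \ar[r] & x}
$$
commutes for a suitable choice of the two legs out of $*$). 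Since $\calP$ is a poset, any two morphisms with the same source and target in $\calP$ coincide, so the only subtlety is bookkeeping with the group elements; I expect this to work out because $*$ being initial forces the relevant diagrams in $\calP$ to commute, and the $G$-labels compose associatively. Once the Ore condition holds, Example 1.4 lets us equip $\PG$ with the atomic topology $\calJ_{at}$.

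For the second assertion, fix $x\in\Ob\PG$ and consider the sieve $R_x$ generated by the single morphism $\iota^x_* 1 : *\to x$ (or rather by all morphisms $*\to x$ in $\PG$, which form one orbit under precomposition by $\Aut_{\PG}(*)$). Concretely $R_x$ consists of all morphisms ${\sf v}$ with codomain $x$ that factor through some morphism $*\to x$. I claim $R_x$ is non-empty and is contained in every non-empty sieve on $x$: if $S$ is any non-empty sieve on $x$, pick $y$ with $S(y)\neq\emptyset$ and some ${\sf u}\in S$ with domain $y$; since $*$ is initial in $\calP$, there is a morphism $\iota^y_* 1 : *\to y$ in $\PG$, and then ${\sf u}(\iota^y_* 1) : *\to x$ lies in $S$ (sieves are closed under precomposition). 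Hence $S$ contains a morphism from $*$, so $S$ contains the whole orbit of such morphisms under $\Aut_{\PG}(*)\cong G$ — wait, more carefully: $S$ contains \emph{one} morphism $*\to x$, and then by closure under precomposition by automorphisms of $*$ it contains all of them, and then it contains everything they generate, i.e.\ $R_x\subseteq S$. Thus $R_x$ is the unique minimal non-empty sieve on $x$, and in particular it is the smallest covering sieve in the atomic topology.

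The main obstacle I anticipate is not conceptual but notational: keeping the formal products ${\sf u}g$ straight when composing, since $({\sf u}g)({\sf v}h) = ({\sf u}\,{}^g{\sf v})(gh)$ and one must be careful that ``${\sf u}$ factors through a morphism out of $*$'' is genuinely preserved under further precomposition — this uses that a morphism $z\to *$ would have to land in $\calP$'s initial object, and since $*$ is initial (not terminal) we should think of morphisms \emph{out of} $*$, so the correct statement is closure under precomposition, which sieves satisfy by definition. A secondary point to nail down is that all morphisms $*\to x$ in $\PG$ form a single $\Aut_{\PG}(*)$-orbit: any such morphism is $\iota^x_{{}^g *}g = \iota^x_* g$ (as ${}^g* = *$, the initial object being $G$-fixed up to the unique isomorphism), and these are exactly the composites $(\iota^x_* 1)(1_* g)$, so they are permuted transitively by the $1_* g$. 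Once this is in hand, uniqueness of the minimal sieve is immediate, and this minimal sieve is precisely what we will use in Proposition 3.5 to give an explicit formula for sheafification.
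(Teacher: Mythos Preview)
Your approach matches the paper's almost exactly: use the initial object $*$ (which is $G$-fixed since $G$ acts by poset automorphisms and initial objects are unique) to complete the Ore square, and show that $\{\iota^x_* g : g \in G\}$ is the unique minimal non-empty sieve by precomposing an arbitrary morphism in a given sieve down to $*$ and then hitting it with $\Aut_{\PG}(*)\cong G$.

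The one place you leave genuinely open is the commutativity of the Ore square, and your tentative legs $\iota^w_* 1$ and $\iota^z_* 1$ do \emph{not} work as written: composing gives
\[
(\iota^x_{{}^gw}g)(\iota^w_* 1) = \iota^x_*\, g
\quad\text{and}\quad
(\iota^x_{{}^hz}h)(\iota^z_* 1) = \iota^x_*\, h,
\]
which disagree when $g \ne h$. The fix---and this is exactly what the paper does---is to twist the legs by the inverse group elements: take $\iota^w_* g^{-1} : * \to w$ and $\iota^z_* h^{-1} : * \to z$. Then both composites equal $\iota^x_* 1$ and the square commutes. You correctly flagged ``bookkeeping with the group elements'' as the only subtlety; this is precisely it, and once you make that adjustment your argument is complete and coincides with the paper's.
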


\begin{proof} Set $x_0$ as the (unique) initial object in $\calP$. It is fixed under the $G$-action, that is, $x_0^g=x_0, \forall g\in G$. Let $\iota^x_{y^g}g: y \to x$ and $\iota^x_{z^h}h: z \to x$ be two morphisms. Then $\iota^y_{x_0}g^{-1} : x_0\to y$ and $\iota^z_{x_0}h^{-1} : x_0 \to z$ will complete the square in the Ore condition. Thus one can introduce the atomic topology on $\calP\rtimes G$.

Suppose $S$ is a non-empty sieve on $x$ with a morphism $\iota^x_{y^g}g : y \to x$ in $\Mor\calP\rtimes G$.  Consider the morphism $\iota^y_{x_0}g^{-1} : x_0 \to y$. The composite
$(\iota^x_{y^g}g)(\iota^y_{x_0}g^{-1}) : x_0 \to x$ belongs to $S$. Since the composite is $\iota^x_{x_0} 1\in S$, $(\iota^x_{x_0} 1)(\iota^{x_0}_{x_0}g')=\iota^x_{x_0}g'\in S$ for all $g'\in G$. One immediately verifies that $S_x^{\min}=\{\iota^x_{x_0}g'\bigm{|} g'\in G\}$ is a sieve on $x$, and it is minimal and unique among non-empty sieves on $x$. 

In terms of a subfunctor of $S_x^{\max}=\Hom_{\calP\rtimes G}(-,x)$, $S_x^{\min}$ is characterized by 
$$
S_x^{\min}(y) = \left\{ \begin{array}{ll}
              \Hom_{\PG}(x_0,x) ,& \mbox{if}\ y=x_0;\\
              \emptyset\   ,& \mbox{otherwise.}
             \end{array}
      \right.
$$
\end{proof}

Note that $\Hom_{\PG}(x_0,x)$ can be identified with $G$, as a right $G$-set. For future reference, we set $\pG=(\PG,\calJ_{at})$ whenever $\calP$ has an initial object. Then $\TG$ is a special case of $\pG$.

The above properties are inherited by quotient categories.

\begin{corollary} Let $\calP$ be a $G$-poset with an initial object. Suppose we have a covariant functor $\rho : \PG \to \calC$, which is identity on objects and which is surjective on morphism sets. Then $\calC$ satisfies the Ore condition and can be given the atomic topology. There is a unique minimal non-empty sieve on each $x\in\Ob\calC$.
\end{corollary}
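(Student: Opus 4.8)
The plan is to push everything across $\rho$. Since $\rho$ is a functor that is the identity on objects and surjective on every morphism set, each morphism of $\calC$, and each commuting diagram in $\calC$, lifts to $\PG$, where the preceding lemma applies. For the Ore condition, given two morphisms $a:y\to x$ and $b:z\to x$ in $\calC$, I would choose lifts $\tilde a,\tilde b$ in $\PG$; the proof of the preceding lemma completes the square there through the initial object $x_0$, namely by $\iota^y_{x_0}g^{-1}$ and $\iota^z_{x_0}h^{-1}$. Applying $\rho$, which fixes $x_0$ and preserves composition, yields a commuting square in $\calC$ with corner $x_0$. Hence $\calC$ satisfies the Ore condition, and the atomic topology $\calJ_{at}$ is defined on it exactly as in the earlier example introducing atomic topologies.

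For the minimal sieve, set $T_x:=\rho(S_x^{\min})$, i.e. the set of morphisms $\rho(\iota^x_{x_0}g'):x_0\to x$ for $g'\in G$. First I would check that $T_x$ is a non-empty sieve on $x$: it is non-empty because it contains $\rho(\iota^x_{x_0}1)$, and for closure under precomposition the key point is that $x_0$, being the $G$-fixed initial object of $\calP$, receives no morphism in $\PG$ except from itself, so by surjectivity of $\rho$ the only morphisms of $\calC$ with codomain $x_0$ are images of the $\iota^{x_0}_{x_0}h$; precomposing $\rho(\iota^x_{x_0}g')$ with such a morphism lands back in $T_x$ via the composition rule of $\PG$. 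Thus $T_x$, viewed as a subfunctor of $\Hom_{\calC}(-,x)$, is concentrated at $x_0$, where it equals $\Hom_{\calC}(x_0,x)$. To see that $T_x$ is minimal, I would take any non-empty sieve $S$ on $x$ in $\calC$, pick $f:y\to x$ in $S$ with a lift $\tilde f$ in $\PG$; the computation in the proof of the preceding lemma shows that every $\iota^x_{x_0}g'$ factors through $\tilde f$ in $\PG$, so applying $\rho$ and using that $S$ is a sieve containing $f$ gives $\rho(\iota^x_{x_0}g')\in S$ for all $g'$, that is, $T_x\subseteq S$. Being itself non-empty, $T_x$ is therefore the unique minimal non-empty sieve on $x$.

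The main obstacle is the middle step: the image of a sieve under an arbitrary functor need not be a sieve, so establishing that $T_x=\rho(S_x^{\min})$ is closed under precomposition genuinely requires both the surjectivity of $\rho$ on morphisms and the rigidity of the initial object $x_0$ (nothing maps into it but itself). Everything else — the Ore condition, and, once $T_x$ is known to be a sieve, its minimality and uniqueness — is a formal transfer along $\rho$ of the corresponding facts for $\PG$.
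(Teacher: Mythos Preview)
Your proof is correct and follows essentially the same route as the paper: lift morphisms to $\PG$, invoke the previous lemma there, and push back along $\rho$, taking $\rho(S_x^{\min})$ as the minimal sieve. The paper's only variation is to phrase minimality via the preimage sieve $\rho^{-1}(S)\supset S_x^{\min}$ rather than by explicit factorization, and it asserts without elaboration that $\rho$ of a sieve is a sieve---precisely the point you flag as the main obstacle and carefully justify.
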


\begin{proof} For convenience, we identity the objects of $\PG$ and $\calC$. Take two morphisms $y\to x$ and $z \to x$ in $\calC$. Choose a preimage for each of them in $\PG$. These morphisms are still with codomain $x$ and can be completed to a commutative square. The image of the square provides a commutative square in $\calC$.

Let $S$ be a non-empty sieve on $x\in\Ob\calC$. Then 
$$
\rho^{-1}(S)=\{\alpha\in\Mor\PG \bigm{|} \rho(\alpha)\in S\subset\Mor\calC\}
$$ 
is a non-empty sieve on $x\in\Ob\PG$. Meanwhile if $S'$ is a non-empty sieve on $x\in\Ob\PG$, $\rho(S')$ becomes a non-empty sieve on $x\in\Ob\calC$. Therefore we have $S_x^{\min}\subset\rho^{-1}(S)$ and $\rho(S_x^{\min})\subset S$. One immediately sees that $\rho(S_x^{\min})$ has to be minimal within non-empty sieves on $x\in\Ob\calC$, and it is unique.

In terms of a subfunctor of $S_x^{\max}=\Hom_{\calC}(-,x)$, it is characterized by 
$$
S_x^{\min}(y) = \left\{ \begin{array}{ll}
              \Hom_{\calC}(x_0,x) ,& \mbox{if}\ y=x_0;\\
              \emptyset\   ,& \mbox{otherwise.}
             \end{array}
      \right.
$$
\end{proof}

If $\calC=\PG$ and $\rho$ is the identity functor, then we go back to the situation in Lemma 3.3.

\begin{proposition} Let $\calP$ be a $G$-poset with an initial object $x_0$, and suppose the following is a category extension (see \cite{We1, Xu1})
$$
\calK \to \PG\ {\buildrel{\rho}\over{\to}}\ \calC.
$$
Given the atomic topology on $\calC$, the sheafification of $\frakG\in\PSh(\calC)$ is the fixed-point sheaf $\frakF_{\frakG(x_0)}\in\Sh(\C)$ in the sense that $\frakF_{\frakG(x_0)}(x)=\frakG(x_0)^{\calK(x)}$. 

Consequently sheaves on $\C$ are of the form $\frakF_M$ with $M$ running over all $G$-sets. Particulary the sheaves on $\pG$ are all the constant sheaves $\kappa_M$.
\end{proposition}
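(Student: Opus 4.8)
The plan is to compute the sheafification of an arbitrary $\frakG\in\PSh(\calC)$ by hand, using the unique minimal non-empty (hence smallest covering) sieve $S^{\min}_x$ supplied by Corollary 3.6. Recall that the associated sheaf is obtained by iterating the plus-construction, where $\frakG^{+}(x)=\varinjlim_{S}\Nat(S,\frakG)$, the filtered colimit running over the covering sieves $S$ on $x$ ordered by reverse inclusion. The role of Corollary 3.6 here is that this index poset has a terminal object, namely $S^{\min}_x$, so the colimit collapses and $\frakG^{+}(x)\cong\Nat(S^{\min}_x,\frakG)$, naturally in $x$.

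The next step is to identify $\Nat(S^{\min}_x,\frakG)$. As a presheaf on $\calC$, the sieve $S^{\min}_x$ is concentrated at $x_0$ with value $\Hom_{\calC}(x_0,x)$ and the right action of $\Aut_{\calC}(x_0)$ by precomposition; so a natural transformation $S^{\min}_x\to\frakG$ is precisely an $\Aut_{\calC}(x_0)$-equivariant map $\Hom_{\calC}(x_0,x)\to\frakG(x_0)$. I would then unwind the structure of $\PG$ and of the extension $\rho\colon\PG\to\calC$. Since $x_0$, being initial, is $G$-fixed, there is a canonical isomorphism $\Aut_{\PG}(x_0)\cong G$; it makes $\frakG(x_0)=(\Res_{\rho}\frakG)(x_0)$ into a $G$-set, and the analogous computation identifies $\Hom_{\calC}(x_0,x)$ with the set of cosets $\calK(x)\backslash G$ carrying the residual right $G$-action. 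Hence $\Nat(S^{\min}_x,\frakG)$ is the set of $G$-equivariant maps $\calK(x)\backslash G\to\frakG(x_0)$, which is canonically $\frakG(x_0)^{\calK(x)}$. So $\frakG^{+}(x)\cong\frakG(x_0)^{\calK(x)}$.

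To see that $\frakG^{+}$ is already a sheaf, note that the only non-empty sieve on $x_0$ is the maximal one (any morphism into $x_0$ is an automorphism), so $S^{\min}_{x_0}=\Hom_{\calC}(-,x_0)$ and Yoneda gives $\frakG^{+}(x_0)\cong\frakG(x_0)$ as $G$-sets; feeding this into the formula once more yields $\frakG^{++}(x)\cong\frakG^{+}(x_0)^{\calK(x)}\cong\frakG(x_0)^{\calK(x)}\cong\frakG^{+}(x)$, compatibly with the canonical comparison $\frakG^{+}\to\frakG^{++}$, which (as $\frakG^{+}$ is separated) is therefore an isomorphism. Thus $\frakG^{\sharp}=\frakG^{+}$. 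Finally I would chase a morphism ${\sf a}\colon y\to x$ of $\calC$ through all the identifications: under $\frakG^{\sharp}(x)\cong\frakG(x_0)^{\calK(x)}$ the transition map $\frakG^{\sharp}({\sf a})$ becomes the map $\frakG(x_0)^{\calK(x)}\to\frakG(x_0)^{\calK(y)}$ given by the action of $g$, where $\iota^{x}_{{}^{g}y}g$ is any lift of ${\sf a}$ to $\PG$. This is well defined (independent of the lift) and takes values in $\frakG(x_0)^{\calK(y)}$ precisely because $\calK$ is normal in $\PG$, i.e. ${}^{g}y\le x$ forces $g\calK(y)g^{-1}\subseteq\calK(x)$ — which is exactly the content of the extension hypothesis. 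This identifies $\frakG^{\sharp}$ with the fixed-point presheaf $\frakF_{\frakG(x_0)}$.

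The consequences are then formal. If $\frakF\in\Sh(\C)$, then $\frakF=\frakF^{\sharp}=\frakF_{\frakF(x_0)}$, so every sheaf has the stated form; conversely for any $G$-set $M$ one checks, again via the minimal sieve and the normality of $\calK$, that the presheaf $\frakF_M$ satisfies the sheaf axiom. When $\calC=\PG$ every $\calK(x)$ is trivial, so $\frakF_M(x)=M$ for all $x$ with transition maps given by the $G$-action, i.e. $\frakF_M=\kappa_M$; hence $\Sh(\pG)$ consists exactly of the constant presheaves $\kappa_M$. I expect the real work to be bookkeeping rather than conceptual: keeping the left and right group actions straight in the identification $\Nat(S^{\min}_x,\frakG)\cong\frakG(x_0)^{\calK(x)}$, and — the single place the extension hypothesis is genuinely used — checking that the transition maps of $\frakG^{\sharp}$ are exactly those of $\frakF_{\frakG(x_0)}$, which comes down to the normality of $\calK$ inside $\PG$.
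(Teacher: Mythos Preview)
Your proposal is correct and follows essentially the same route as the paper: both compute the plus-construction at $x$ by observing that the existence of a unique minimal covering sieve $S^{\min}_x$ collapses the colimit to $\Nat(S^{\min}_x,\frakG)\cong\Hom_G(\calK(x)\backslash G,\frakG(x_0))\cong\frakG(x_0)^{\calK(x)}$, and then check that one iteration already yields a sheaf. Your write-up is in fact slightly more thorough than the paper's, which asserts $\frakG^{\dagger}\cong\frakF_{\frakG(x_0)}$ ``straightforwardly'' without spelling out the transition maps or the role of the inclusion $g\calK(y)g^{-1}\subset\calK(x)$; the only discrepancy is that the relevant result in the paper is numbered Corollary~3.4, not 3.6.
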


\begin{proof} By standard procedures of construction (see for instance \cite[III.5]{MM} or \cite[7.10]{St}), given a presheaf $\frakG$, we shall first construct $\frakG^{\dagger}\in\PSh(\calC)$ which is given by
$$
\frakG^{\dagger}(x)=\lim_{S\in\calJ(x)}\Nat(S,\frakG), \forall x\in\Ob\calC.
$$ 
If $\frakG^{\dagger}$ is not a sheaf, then we repeat the construction and $\frakG^{\sharp}=(\frakG^{\dagger})^{\dagger} \in\Sh(\C)$ is the sheafification of $\frakG$.

By Corollary 3.4, on every $x\in\Ob\calC$ there is always a unique minimal non-empty sieve $S_x^{\min}\in\calJ(x)$ characterized by 
$$
S_x^{\min}(y) = \left\{ \begin{array}{ll}
              \Hom_{\calC}(x_0,x) ,& \mbox{if}\ y=x_0;\\
              \emptyset\   ,& \mbox{otherwise}.
             \end{array}
      \right.
$$
Moreover $\Hom_{\calC}(x_0,x)=\calK(x)\backslash\Hom_{\PG}(x_0,x)$ is identified with the right $G$-set $\calK(x)\backslash G$. Thus we find $\Nat(S_x^{\min},\frakG)$ is isomorphic to
$$
\Hom_G(\calK(x)\backslash G,\frakG(x_0))\cong \frakG(x_0)^{\calK(x)}.
$$
It implies that $\frakG^{\dagger}(x)\cong \frakG(x_0)^{\calK(x)}$ for each $x\in\Ob\calC$ and it is straightforward to verify that $\frakG^{\dagger}\cong\frakF_{\frakG(x_0)}$. However $\frakF_{\frakG(x_0)}^{\dagger}=\frakF_{\frakG(x_0)}$ is already a sheaf, that is, $\frakG^{\sharp}\cong\frakF_{\frakG(x_0)}$.

Note that $\frakG(x_0)$ is always a $G$-set. Moreover if $M$ is a $G$-set, we can define a presheaf $\frakG_M$ such that $\frakG(x_0)=M$ and $\frakG(x)=\emptyset$, whenever $x\ne x_0$. Its sheafification is simply $\frakF_M$. Particularly if $\calC=\PG$, then $\calK(x)=1$ for all objects. Under the circumstance, we have $\frakG^{\sharp}\cong\kappa_{\frakG(x_0)}$, a constant sheaf.
\end{proof}

In order to establish functors between sheaf categories, we discuss the continuity and/or cocontinuity of $\pi$ and $\rho$. 

\begin{proposition} Suppose $\calP$ is a $G$-poset with an initial object. Let $\pi:\PG\to G$ and $\rho: \PG\to\calC$ be the above two functors. Then, under atomic topologies on three categories,
\begin{enumerate}
\item $\pi$ is both continuous and cocontinuous;
\item $\rho$ is cocontinuous.
\end{enumerate}
\end{proposition}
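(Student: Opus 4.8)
The plan is to check the cover-preserving and flatness conditions of Definition 2.1 directly against the explicit description of atomic topologies supplied by Lemma 3.3 and Corollary 3.4. Throughout, write $x_0$ for the initial object of $\calP$, which is $G$-fixed, and recall that every non-empty sieve on an object $x$ contains the unique minimal sieve $S_x^{\min}$, generated by the morphisms $\iota^x_{x_0}g'$ (for $\pG$) or their $\rho$-images (for $\calC$).

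For part (1), continuity of $\pi:\PG\to G$. Since $G$ carries the atomic ($=$ trivial) topology, its only covering sieve on $\bullet$ is the maximal one $\Hom_G(-,\bullet)$; so the cover-preserving condition is automatic, because the image under $\pi$ of \emph{any} non-empty sieve generates the maximal sieve on $\bullet$. (Indeed $\pi$ is surjective on the one relevant $\Hom$-set, as $\Hom_{\PG}(x_0,x)\cong G$ maps onto $G$.) For flatness I must show $(\bullet/\pi)^{op}$ is filtered. An object of $\bullet/\pi$ is a pair $(g,x)$ with $g\in G=\Hom_G(\bullet,\pi(x))$, i.e.\ just an object $x$ of $\calP$ together with an element $g\in G$; a morphism $(g,x)\to(g',x')$ is a $\PG$-morphism ${\sf u}g_1:x\to x'$ with $\pi({\sf u}g_1)g=g'$, i.e.\ $g_1g=g'$. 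I would verify the three filtration axioms for the opposite category: nonemptiness is clear ($x_0$ is present); for any two objects $(g,x)$, $(g',x')$, the object $(1,x_0)$ maps to both via $\iota^x_{x_0}(g)$ and $\iota^{x'}_{x_0}(g')$, using that $x_0$ is initial and $G$-fixed; and any two parallel arrows into a common target are equalized by precomposing with a morphism out of $(1,x_0)$ — this works because $\Hom_{\PG}(x_0,x)$ is a free transitive $G$-set, so any two morphisms $x_0\to x$ differ by postcomposition with a unique automorphism $\iota^{x}_{x_0}(h)\iota^{x_0}_{x_0}(\cdot)$... I should double-check the direction of composites carefully here, since $(1_{{}^gx}g)({\sf u}1)\ne{\sf u}g$ in general; this bookkeeping is where the only real care is needed. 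For cocontinuity of $\pi$: given the maximal covering sieve on $\pi(x)=\bullet$, I must exhibit a covering sieve $S_x$ on $x$ with $\pi(S_x)\subseteq\Hom_G(-,\bullet)$; but the latter containment is vacuous, so \emph{any} non-empty sieve on $x$ works, e.g.\ $S_x^{\min}$, which is covering by Lemma 3.3.

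For part (2), cocontinuity of $\rho:\PG\to\calC$. Here both sides carry atomic topologies (Corollary 3.4 gives the one on $\calC$). Let $S_{\rho(x)}$ be a covering, i.e.\ non-empty, sieve on $\rho(x)=x$ in $\calC$. Then, as in the proof of Corollary 3.4, the sieve $\rho^{-1}(S_{\rho(x)})=\{\alpha\in\Mor\PG\mid\rho(\alpha)\in S_{\rho(x)}\}$ is a non-empty sieve on $x$ in $\PG$ — nonemptiness because $S_{\rho(x)}$ contains the $\rho$-image of $\Hom_{\PG}(x_0,x)$, hence $\iota^x_{x_0}(g)\in\rho^{-1}(S_{\rho(x)})$ for some (indeed all) $g$ — and it is therefore covering in $\pG$. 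Since $\rho(\rho^{-1}(S_{\rho(x)}))\subseteq S_{\rho(x)}$, the equivalent formulation in Definition 2.1 (the sieve $\{{\sf u}\mid\rho({\sf u})\in S_{\rho(x)}\}$ being covering) is exactly what we have verified, so $\rho$ is cocontinuous. Note we do \emph{not} claim $\rho$ is continuous: flatness of $(d/\rho)^{op}$ would fail in general, consistent with the asymmetry in the statement.

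The main obstacle is the verification of flatness of $(\bullet/\pi)^{op}$ — specifically getting the composition conventions in $\PG$ right so that the equalizing-of-parallel-arrows axiom genuinely holds. Everything else reduces either to the triviality of the topology on $G$ (making cover-preservation and the $\pi$-cocontinuity condition vacuous) or to the already-proved bookkeeping of Lemma 3.3 and Corollary 3.4 about minimal non-empty sieves and their behaviour under $\rho$.
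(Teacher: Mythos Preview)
Your proof is essentially correct and follows the same overall strategy as the paper --- direct verification of the conditions in Definition~2.1 against the explicit atomic topologies --- and your treatment of cover-preservation for $\pi$ and of cocontinuity for both $\pi$ and $\rho$ matches the paper's (your argument for $\rho$ is in fact more detailed than the paper's ``in a similar fashion'').

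The one substantive difference is in the flatness check. The paper does not verify the filtered axioms for $(\bullet/\pi)^{op}$ by hand; instead it invokes (from \cite{Xu1}) that $\bullet/\pi$ has the poset $\calP$ as a skeleton, so its opposite has a terminal object and is therefore filtered. Your hands-on approach works too, but you can shortcut the part you flagged as worrisome: the object $(1,x_0)$ is actually \emph{initial} in $\bullet/\pi$ (a morphism $(1,x_0)\to(g,x)$ forces $h=g$ and then ${\sf u}:{}^gx_0=x_0\to x$ is the unique poset arrow), so $(\bullet/\pi)^{op}$ has a terminal object and all three filtration axioms follow at once --- no separate equalizer argument is needed. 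Alternatively, note that between any two objects of $\bullet/\pi$ there is at most one morphism (the $G$-component is forced and the $\calP$-component lives in a poset), so parallel arrows are automatically equal. Either observation dissolves the bookkeeping concern you raised.
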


Remark 3.2 hints that when $\calP=\calS(G)$ $\rho$ will not preserve fibre products, regarded as a functor $\calT(G)\to\rSet{G}$. Hence it is not continuous.

\begin{proof} To verify that $\pi$ is continuous, we first note that, for any $S_x\in\calJ(x)$, $\pi(S_x)$ generates exactly the only covering sieve on $\bullet$. Then since $\bullet\backslash\pi$ has the poset $\calP$ as a skeleton \cite{Xu1}, its opposite has a terminal object and thus is filtered. 

To show $\pi$ is cocontinuous, we need to demonstrate that for each object $x\in\Ob\PG$ and the unique covering sieve $S\in\calJ(\pi(x))$, the maps $y \to x$ such that $\pi(y) \to \pi(x)$ factors through $S$ is a covering sieve on $x$. These maps include all those with codomain $x$, and form the maximal sieve which covers $x$.

In a similar fashion, one establishes the cocontinuity of $\rho$.
\end{proof}

One can also use \cite[III.4.2]{MM} to verify directly that $\kappa_M=\Res_{\pi}(M)$ is a sheaf, for all $M\in\PSh(G)=\Sh(\GG)$. 

We note that $\pi$ usually does not give rise to a morphism of sites, for 
$$
\Sh(\pG)\hookrightarrow\PSh(\PG) {\buildrel{LK_{\pi}}\over{\longrightarrow}} \PSh(G)=\Sh(\GG)
$$ 
is not exact. Nonetheless we have two morphisms of topoi.

\begin{corollary} Suppose $\calP$ is a $G$-poset with an initial object. The functor $\pi : \PG \to G$ induces a morphism of topoi
$$
\Xi=(\Xi^{-1},\Xi_*) : \Sh(\GG) \to \Sh(\pG).
$$
as well as another morphism of topoi
$$
\Pi=(\Pi^{-1},\Pi_*) : \Sh(\pG) \to \Sh(\GG).
$$
The functor $\rho:\PG \to\calC$ induces a morphism of topoi
$$
\Theta=(\Theta^{-1},\Theta_*) : \Sh(\pG) \to \Sh(\C).
$$
\end{corollary}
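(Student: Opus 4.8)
The plan is to extract all three geometric morphisms directly from Proposition 3.8, feeding it into Remark 3.6, Lemma 3.7 and Definition 2.5; the only step that is not pure bookkeeping is the left-exactness of one inverse-image functor.

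First I would treat $\Pi$. By Proposition 3.8(1) the functor $\pi:\PG\to G$ is cocontinuous, so Remark 3.6 (taken with $\beta=\pi$, $\calC=\PG$, $\calD=G$, whence $\C=\pG$ and $\D=\GG$) produces at once a morphism of topoi $\Sh(\pG)\to\Sh(\GG)$ with direct image $RK_{\pi}$ and inverse image $\Res_{\pi}^{\sharp}$. Since $\pi$ is moreover continuous (again Proposition 3.8(1)), I would invoke Lemma 3.7(1) to replace $\Res_{\pi}^{\sharp}$ by $\Res_{\pi}$ itself: a continuous functor makes restriction preserve sheaves, so no sheafification is needed. Setting $\Pi_{*}=RK_{\pi}$ and $\Pi^{-1}=\Res_{\pi}$ then gives $\Pi$.

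For $\Xi$ I would use the \emph{other} adjoint supplied by Lemma 3.7: because $\pi$ is continuous and cocontinuous, $\Phi_{!}=LK_{\pi}^{\sharp}:\Sh(\pG)\to\Sh(\GG)$ is left adjoint to $\Res_{\pi}:\Sh(\GG)\to\Sh(\pG)$. I would then set $\Xi_{*}=\Res_{\pi}$ and $\Xi^{-1}=LK_{\pi}^{\sharp}$, so that $\Xi^{-1}$ is left adjoint to $\Xi_{*}$ and $\Xi:\Sh(\GG)\to\Sh(\pG)$. The remaining obligation, and the one genuinely non-formal point, is that $\Xi^{-1}=LK_{\pi}^{\sharp}$ is left exact. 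Here the \emph{flat} half of the continuity of $\pi$ does the work: flatness says each opposite comma category $(\bullet/\pi)^{op}$ is filtered, so $LK_{\pi}$ is computed by filtered colimits, which commute with finite limits, and sheafification is exact; composing, $\Xi^{-1}$ is left exact. Alternatively, and more cheaply in our concrete situation, Proposition 3.5 shows that every sheaf on $\pG$ is a constant sheaf $\kappa_{M}=\Res_{\pi}(M)$, so $\Xi_{*}=\Res_{\pi}:\Sh(\GG)\to\Sh(\pG)$ is an equivalence and its quasi-inverse $\Xi^{-1}$ is automatically exact.

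Finally, $\Theta$ repeats the first step with $\rho$ in place of $\pi$: by Proposition 3.8(2) the functor $\rho:\PG\to\calC$ is cocontinuous, so Remark 3.6 yields a morphism of topoi $\Theta=(\Theta_{*},\Theta^{-1}):\Sh(\pG)\to\Sh(\C)$ with $\Theta_{*}=RK_{\rho}$ and $\Theta^{-1}=\Res_{\rho}^{\sharp}$. This time the sheafification cannot be dropped, since $\rho$ is in general not continuous (it fails to preserve fibre products, as noted after Proposition 3.8), so $\Res_{\rho}$ need not take values in $\Sh(\C)$. I expect the left-exactness verification for $\Xi^{-1}$ to be the only real obstacle; everything else is a routine application of the cited results once Proposition 3.8 is in hand.
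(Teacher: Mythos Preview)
Your proposal is correct and matches the paper's proof: all three morphisms are defined identically ($\Pi_*=RK_\pi$, $\Pi^{-1}=\Res_\pi$; $\Xi_*=\Res_\pi$, $\Xi^{-1}=LK_\pi^\sharp$; $\Theta_*=RK_\rho$, $\Theta^{-1}=\Res_\rho^\sharp$), with the cocontinuity of $\pi$ and $\rho$ supplying $\Pi$ and $\Theta$ just as you say. The only nuance is the left-exactness of $\Xi^{-1}$: the paper observes concretely that $LK_\pi\cong\lim_{\calP}$ and, since $\calP$ has initial object $x_0$, this collapses to $\frakF\mapsto\frakF(x_0)$, which is manifestly exact---your filtered-colimit argument (and your alternative via the classification of sheaves on $\pG$) are equivalent formulations of the same fact. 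One small slip: in your last paragraph $\Res_\rho$ goes from sheaves on $\C$ to presheaves on $\PG$, so you mean it need not land in $\Sh(\pG)$, not $\Sh(\C)$.
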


\begin{proof} Define $\Xi_*=\Res_{\pi} : \Sh(\GG) \to \Sh(\pG)$ and 
$$
\Xi^{-1} : \Sh(\pG)\hookrightarrow\PSh(\PG) {\buildrel{LK_{\pi}}\over{\longrightarrow}} \PSh(G)=\Sh(\GG)
$$
It is crucial to see that $\Xi^{-1}$ is left exact. In fact, The forgetful functor is left exact. Meanwhile $LK_{\pi}=LK_{\pi}^{\sharp}$ is exact since $LK_{\pi}\cong\lim_{\calP}$ and $\calP$ has an initial object $x_0$. Note that $\Xi^{-1}(\frakF)=\frakF(x_0)$.

The cocontinuity of $\pi$ says that $RK_{\pi}\frakF=\colim_{\calP}\frakF\in\Sh(\GG)$, for any $\frakF\in\Sh(\pG)$. Then we put $\Pi_*=RK_{\pi}$ and $\Pi^{-1}=\Res_{\pi}$.

The cocontinuity of $\rho$ implies that $RK_{\rho}\frakF\in\Sh(\C)$, for any $\frakF\in\Sh(\pG)$.  We set $\Theta_*=RK_{\rho}$ and  $\Theta^{-1}=\Res_{\rho}^{\sharp}$. While for each $\frakG\in\Sh(\C)$, $\Res_{\rho}^{\sharp}\frakG:=(\Res_{\rho}\frakG)^{\sharp}$. The functor $\Theta^{-1}$ is exact.
\end{proof}

Suppose $\calP$ is a $G$-poset with an initial object. By Proposition 3.5, the sheaves on $\pG$ are the constant sheaves $\kappa_M$ and those on $\C$ are all the fixed-point sheaves $\frakF_M$, with $M$ varying over all $G$-sets. In summary we have the following adjoint functors between sheaf categories
$$
\xymatrix{& \Sh(\pG) \ar@/_/[dl]_{\Theta_*} \ar@/^/[dr]^{\Pi_*,\ \Xi^{-1}} &\\
\Sh(\C) \ar@/_/[ur]_{\Theta^{-1}} && \Sh(\GG)
\ar@/^/[ul]^{\Pi^{-1},\ \Xi_*}.}
$$
All the constructions are intrinsic to $G$, and by the characterizations of sheaves, these functors give equivalences. (Since $\Sh(\pG)$ consists of constant sheaves only, we have $LK_{\pi}=RK_{\pi}$ on them.)

It implies that $\Xi, \Pi$ and $\Theta$ are equivalences of topoi.

\begin{theorem} Suppose $\calP$ is a $G$-poset with an initial object, and $\PG\to\calC$ is a category extension.
\begin{enumerate}
\item Both $\Sh(\GG)$ and $\Sh(\C)$ are equivalent to $\Sh(\pG)$, given by $\Xi$ and $\Theta$.

\item Let $\Upsilon=\Theta\Xi$. The composed morphism between topoi 
$$
\Upsilon=(\Upsilon^{-1},\Upsilon_*) : \Sh(\GG)\to\Sh(\C)
$$ 
is an equivalence, with $\Upsilon_*=RK_{\rho}\Res_{\pi}$ and $\Upsilon^{-1}=LK_{\pi}\Res_{\rho}^{\sharp}$.
\end{enumerate}
\end{theorem}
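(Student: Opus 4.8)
The plan is to deduce Theorem 3.8 from what is already assembled: Proposition 3.5 classifies the sheaves on all three sites, Corollary 3.7 produces the morphisms of topoi $\Xi$ and $\Theta$, and it then only remains to recognize that these are equivalences and to compose them. One preliminary point underlies everything: since $x_0$ is the initial object of $\calP$, no object other than $x_0$ admits a morphism into $x_0$ (in $\PG$, hence in $\calC$); consequently the minimal non-empty sieve $S_{x_0}^{\min}$ of Corollary 3.4 is simply the maximal sieve on $x_0$, so by the Yoneda Lemma the sheafification step $\frakG\mapsto\frakG^\dagger$ leaves the value at $x_0$ unchanged. In particular $\kappa_M(x_0)\cong M$ and $\frakF_M(x_0)\cong M$, i.e. evaluation at $x_0$ recovers the underlying $G$-set.

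For part (1), consider $\Xi=(\Xi_*,\Xi^{-1})$ with $\Xi_*=\Res_\pi$ (so $\Xi_*M=\kappa_M$) and $\Xi^{-1}\frakF=\frakF(x_0)$. As $\calP$ has an initial object it is connected, so a natural transformation $\kappa_M\to\kappa_N$ of constant presheaves on $\PG$ is exactly a single $G$-equivariant map $M\to N$; hence $\Res_\pi$ is fully faithful, and by Proposition 3.5 it is essentially surjective onto $\Sh(\pG)$. Thus $\Xi_*$ is an equivalence of categories and $\Xi$ is an equivalence of topoi. For $\Theta=(\Theta_*,\Theta^{-1})$ with $\Theta^{-1}=\Res_\rho^\sharp$ and $\Theta_*=RK_\rho$, I would check that $\Theta^{-1}$ is an equivalence: it is essentially surjective because $\Res_\rho\frakF_M$, which is the presheaf $x\mapsto M^{\calK(x)}$ on $\PG$, sheafifies (by the case $\calC=\PG$ of Proposition 3.5 together with $(\Res_\rho\frakF_M)(x_0)\cong M$) to the constant sheaf $\kappa_M$, and every object of $\Sh(\pG)$ is such a $\kappa_M$; and it is fully faithful because $\Res_\rho$ is fully faithful on presheaves ($\rho$ being the identity on objects and surjective on morphisms) while passing to sheafifications loses nothing — one checks $\Hom(\Res_\rho\frakF_M,\Res_\rho\frakF_N)\cong\Hom(\kappa_M,\kappa_N)$ using that $\calK(x)$ acts trivially on $\Res_\rho\frakF_M(x)$ since $\rho$ kills $\calK(x)$. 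Since $\Theta^{-1}$ is an equivalence, so is its right adjoint $\Theta_*$; hence $\Theta$ is an equivalence of topoi and $\Theta_*\kappa_M\cong\frakF_M$. This proves (1).

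For part (2), a composite of morphisms of topoi is a morphism of topoi, and if each factor is an equivalence then so is the composite; hence $\Upsilon=\Theta\Xi:\Sh(\GG)\to\Sh(\C)$ is an equivalence of topoi. Reading off the components from Corollary 3.7 gives $\Upsilon_*=\Theta_*\Xi_*=RK_\rho\Res_\pi$ and $\Upsilon^{-1}=\Xi^{-1}\Theta^{-1}=LK_\pi\Res_\rho^\sharp$. That $\Upsilon_*$ preserves sheaves follows from Proposition 3.6 and Remark 2.7: $\Res_\pi$ does because $\pi$ is continuous, and $RK_\rho$ does because $\rho$ is cocontinuous. For $\Upsilon^{-1}$, the functor $LK_\pi$ is, by the proof of Corollary 3.7, evaluation at $x_0$, so it lands in $\PSh(G)=\Sh(\GG)$ and preserves sheaves, while the single sheafification inside $\Res_\rho^\sharp$ is harmless since, by the first paragraph, it does not change the value at $x_0$. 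The only step carrying genuine content is the full faithfulness of $\Theta^{-1}$ — equivalently, that $(\Theta_*,\Theta^{-1})$ is a quasi-inverse pair — and that is just an unwinding of the classification already established in Proposition 3.5; everything else is bookkeeping of adjunctions and a one-line composition.
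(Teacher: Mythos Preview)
Your proof is correct and follows essentially the same approach as the paper: use Proposition~3.5 to classify the sheaves on all three sites as $M$, $\kappa_M$, and $\frakF_M$, and then verify that the morphisms of topoi $\Xi$ and $\Theta$ from Corollary~3.7 match these parametrizations and hence are equivalences. You supply more detail than the paper's very terse argument (which merely cites Proposition~3.5 and identifies $\Upsilon_*,\Upsilon^{-1}$ with the functors of Theorem~1.5), in particular by checking full faithfulness and essential surjectivity of $\Xi_*$ and $\Theta^{-1}$ explicitly, but the underlying strategy is the same.
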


\begin{proof} The first part follows from Corollary 3.4 and Proposition 3.5 (and comments afterwards). Direct calculations show that they together provide a topos equivalence. The second follows from Proposition 3.5.
\end{proof}

Indeed, when $\calP=\calS(G)$, we can choose $\calC=\calO(G)$ (upon identifying objects of $\calT(G)=\calS(G)\rtimes G$ and $\calO(G)$). Then $\Upsilon_*$ and $\Upsilon^{-1}$ specialize to the two functors constructed in Theorem 1.6.

We may apply the result to the following posets (with conjugation action by $G$), which are built on various collections \cite[7.6]{DH} and \cite[Sections 37 \& 40]{Th} of subgroups of $G$ 
\vspace{3mm}
\begin{center}
\begin{tabular}{l|l}
\hline
posets $\calP$ & collections ($k$ being a field of characteristic $p$)\\
\hline
$\calS(G)$ & all subgroups \\
$\calS_p(G)$ & all $p$-subgroups for a prime $p$ \\
$\calS_b(G)$ & all $b$-Brauer pairs for a $p$-block $b$ of $kG$\\
$\calP(G)$ & all local pointed groups over $kGb$\\
\hline
\end{tabular}
\end{center}
\vspace{3mm}
In terms of homology decompositions of classifying spaces, the above collections are called trivial \cite[8.6]{DH} as they are contractible. The resulting transporter categories are as follows.

\begin{center}
\begin{tabular}{rl|ll}
\hline
& $\PG$ & $\calC$ &\\
\hline
transporter category & $\calT(G)$ & $\calO(G)$ & orbit category\\
$p$-transporter category & $\calT_p(G)$ & $\calO_p(G)$ & $p$-orbit category\\
$b$-transporter category & $\calT_b(G)$ & $\calO_b(G)$ & $b$-orbit category\\
$p$-local transporter category & $\calT_{LP}(G)$ & $\calO_{LP}(G)$ & $p$-local orbit category\\
\hline
\end{tabular}
\end{center}
\vspace{3mm}
Here in the category extension $\calK\to\PG\to\calC$, $\calK$ consists of all the subgroups in the corresponding collection. On each subgroup $H$ in a collection, its automorphism groups in $\calK, \PG$ and $\calC$, respectively, form a group extension $H \to N_G(H) \to N_G(H)/H$. In either case, the functor $\pi$ induces a homotopy equivalence between the classifying spaces of $\PG$ and $G$ \cite[8.6]{DH},
$$
B(\PG)\simeq {\rm hocolim}_GB\calP\simeq {\rm hocolim}_G\bullet \simeq BG.
$$ 
Below may be regarded as a representation-theoretic counterpart of this.

\begin{corollary} There are natural category equivalences among 
$$
\Sh(\GG), \Sh(\TG), \Sh(\OG), \Sh(\mathbf{T}_pG), \Sh(\mathbf{O}_pG)
$$
for any prime $p$.
\end{corollary}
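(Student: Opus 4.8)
The plan is to exhibit each of the five sheaf categories as a special case of the construction culminating in Theorem 3.8, and then to chain the resulting equivalences through the common term $\Sh(\GG)$.

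First I would record the two $G$-posets that are needed, together with their category extensions. Take $\calP=\calS(G)$, the poset of all subgroups of $G$ ordered by inclusion, with $G$ acting by conjugation. The trivial subgroup is fixed by every $g\in G$ and is contained in every subgroup, hence is an initial object of $\calS(G)$; so $\calS(G)$ is a $G$-poset with an initial object as required by Lemma 3.3 and Theorem 3.8. Its transporter category is $\calS(G)\rtimes G=\calT(G)$, and the functor $\rho:\calT(G)\to\calO(G)$ of Section 3, which is the identity on objects and surjective on morphism sets, realizes $\calO(G)$ as the codomain of a category extension $\calK\to\calT(G)\to\calO(G)$ with $\calK(H)=H\subset N_G(H)=\Aut_{\calT(G)}(H)$. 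Likewise, for any prime $p$, take $\calP=\calS_p(G)$, the subposet of $p$-subgroups: conjugation preserves $p$-subgroups, the trivial subgroup is still a $p$-subgroup and an initial object, and the same recipe produces $\calT_p(G)=\calS_p(G)\rtimes G$ together with a category extension $\calK_p\to\calT_p(G)\to\calO_p(G)$, the automorphism group of an object being $N_G(P)/P$. (Recall also $\Sh(\GG)=\PSh(G)=\rSet{G}$.)

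Next I would apply Theorem 3.8 twice, always with the atomic topologies. For $\calP=\calS(G)$, with $\calC=\calO(G)$, part (1) yields equivalences of topoi $\Xi:\Sh(\GG)\to\Sh(\TG)$ and $\Theta:\Sh(\TG)\to\Sh(\OG)$, whence
\[
\Sh(\GG)\simeq\Sh(\TG)\simeq\Sh(\OG),
\]
and the composite $\Upsilon=\Theta\Xi$ is, by part (2), the equivalence that recovers the functors of Theorem 1.5. Applying Theorem 3.8 again with $\calP=\calS_p(G)$ and $\calC=\calO_p(G)$ gives $\Sh(\GG)\simeq\Sh(\mathbf{T}_pG)\simeq\Sh(\mathbf{O}_pG)$. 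Since the functors $\Xi,\Theta,\Pi$ of Corollary 3.7 and their quasi-inverses are built entirely from $G$ and its posets of subgroups, composing the appropriate ones along the path through $\Sh(\GG)$ produces, for any two of the five categories, a canonical equivalence, which is exactly the assertion of the corollary. By the discussion at the end of Section 2, the analogous equivalences hold for the corresponding categories of sheaves of $R$-modules.

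The work here is essentially organizational, and I do not foresee a genuine obstacle. The one point that does need verification is that the passage from $\calS(G)$ to $\calS_p(G)$ stays inside the hypotheses of Theorem 3.8 --- that $\calS_p(G)$ with the conjugation action is again a $G$-poset with an initial object, and that the quotient functor $\calT_p(G)\to\calO_p(G)$ genuinely witnesses a category extension $\calK_p\to\calT_p(G)\to\calO_p(G)$ --- together with the routine check that the web of adjoint equivalences supplied by Theorem 3.8 is coherent enough that the composite equivalences do not depend on auxiliary choices. The homotopy equivalence $B(\PG)\simeq BG$ quoted just before the corollary plays no role in the argument; it is there only to motivate the statement.
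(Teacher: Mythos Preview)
Your proposal is correct and follows essentially the same approach as the paper: the corollary is obtained by applying Theorem 3.8 to the $G$-posets $\calS(G)$ and $\calS_p(G)$ (both having the trivial subgroup as initial object) together with their category extensions $\calT(G)\to\calO(G)$ and $\calT_p(G)\to\calO_p(G)$, and then linking the resulting equivalences through $\Sh(\GG)$. The paper gives no separate proof for this corollary, presenting it as an immediate consequence of Theorem 3.8 and the tables of posets and transporter categories preceding it.
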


Since all the categories and functors are constructed intrinsically from $G$, our proof reveals more information than that found in \cite{Ar, MM}. A natural question to ask is that what else posets may be used to obtain category equivalences, or something close? The poset $\calS^{\circ}_p(G)$ of \textit{non-trivial} $p$-subgroups stands out prominently, see \cite[8.7-8.10]{DH}. It seems that very different techniques are needed to deal with it.

The local categories of \cite[Section 47]{Th} form \textit{opposite extensions} with certain transporter categories \cite{Xu1}. Their sheaf categories also require different treatments.

\begin{remark} If $\calP$ is a $G$-poset with a terminal object, then we can still obtain some category equivalences. The reason is that $(\PG)^{op}\cong\calP^{op}\rtimes G^{op}$ while $\calP^{op}$ has an initial object and $G^{op}\cong G$.
\end{remark}

The results in this section are still true for sheaves with suitable algebraic structures. We make these explicit in the next section.

\section{Modules on sites}

In finite group representations, we often need to work with finitely generated modules. The corresponding concept is a coherent sheaf on a ringed site. To this end, we shall recall some more definitions from \cite{St}. 

\subsection{Ringed sites and ringed topoi} For the reader's convenience, we copy the definitions from \cite{St}. 

\begin{definition} 
\begin{enumerate}
\item A \textit{ringed site} is a pair $(\C,\calR_{\C})$ where $\C$ is a site and $\calR_{\C}$ is a sheaf of rings on $\C$. The sheaf $\calR_{\C}$ is called the \textit{structure sheaf} of the ringed site.

\item A \textit{ringed topos} is a pair $(\Sh(\C),\calR_{\C})$ where $\C$ is a site and $\calR_{\C}$ is a sheaf of rings on $\C$. The sheaf $\calR_{\C}$ is the \textit{structure sheaf} of the ringed site.

\item Let $(\Sh(\C),\calR_{\C}), (\Sh(\C'),\calR_{\C'})$ be ringed topoi. A \textit{morphism of ringed topoi} $(\Psi, \Psi^{\sharp}) : (\Sh(\C),\calR_{\C}) \to (\Sh(\C'),\calR_{\C'})$ is given by a morphism of topoi $\Psi : \Sh(\C) \to \Sh(\C')$ together with a map of sheaves of rings $\Psi^{\sharp} : \Psi^{-1}\calR_{\C'}\to\calR_{\C}$, which by adjunction is the same thing as a map of sheaves of rings $\Psi^{\sharp} : \calR_{\C'}\to \Psi_*(\calR_{\C})$.
\end{enumerate}
\end{definition}

Note that here $\Psi^{\sharp}$ is \textit{not} a sheafification, as $\Psi$ is not a sheaf! It might be an unfortunate choice of symbol, but it wouldn't cause much confusion.

We shall make $\GG$ into a ringed site with the structure sheaf $\calR_{\GG}$ such that $\calR_{\GG}(\bullet)=R$. Suppose $\calP$ is a $G$-poset with an initial object and $\PG\to\calC$ is a category extension. For $\pG$ and $\C$, they are ringed sites with the structure sheaves $\calR_{\pG}=\kappa_R$ and $\calR_{\C}=\frakF_R$. All of these are constant sheaves of commutative rings.

\begin{lemma} Suppose $\calP$ is a $G$-poset with an initial object and $\PG\to\calC$ is a category extension. The functors $\pi: \PG \to G$ and $\rho : \PG \to \calC$ induces morphisms of ringed topoi
$$
(\Xi,\Xi^{\sharp}) : (\Sh(\GG),\calR_{\GG}) \to (\Sh(\pG),\calR_{\pG})
$$
with $\Xi^{\sharp} : \Xi^{-1}\calR_{\pG}\to\calR_{\GG}$ being identity, and
$$
(\Theta,\Theta^{\sharp}) : (\Sh(\pG),\calR_{\pG}) \to (\Sh(\C),\calR_{\C})
$$
with $\Theta^{\sharp} : \Theta^{-1}\calR_{\C} \to \calR_{\pG}$ being identity.
\end{lemma}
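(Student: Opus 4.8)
The plan is to start from Corollary 3.7, which already produces the geometric morphisms of topoi $\Xi=(\Xi_*,\Xi^{-1})$ and $\Theta=(\Theta_*,\Theta^{-1})$. By Definition 4.1(3), promoting these to morphisms of \emph{ringed} topoi requires only a map of sheaves of rings $\Xi^{\sharp}:\Xi^{-1}\calR_{\pG}\to\calR_{\GG}$ and a map $\Theta^{\sharp}:\Theta^{-1}\calR_{\C}\to\calR_{\pG}$; the assertion is that in each case the pulled-back structure sheaf is, on the nose, the target structure sheaf, so that $\Xi^{\sharp}$ and $\Theta^{\sharp}$ may be taken to be identity maps. Thus the whole proof reduces to identifying the two inverse images $\Xi^{-1}\calR_{\pG}$ and $\Theta^{-1}\calR_{\C}$.

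For $\Xi$, I would recall from the proof of Corollary 3.7 that $\Xi^{-1}$ is the composite $\Sh(\pG)\hookrightarrow\PSh(\PG)\xrightarrow{LK_{\pi}}\PSh(G)=\Sh(\GG)$ and that concretely $\Xi^{-1}(\frakF)=\frakF(x_0)$, the value at the initial object $x_0$ of $\calP$. Since $\calR_{\pG}=\kappa_R$ is the constant sheaf with value $R$, we obtain $\Xi^{-1}\calR_{\pG}=\kappa_R(x_0)=R=\calR_{\GG}$. By the discussion at the end of Section 2 this functor restricts to one on sheaves of rings, so the identification is one of sheaves of rings and $\Xi^{\sharp}=\mathrm{id}$ is legitimate; equivalently, its adjoint transpose $\calR_{\pG}\to\Xi_*\calR_{\GG}=\Res_{\pi}(R)=\kappa_R$ is the identity.

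For $\Theta$, recall that $\Theta^{-1}=\Res_{\rho}^{\sharp}$, that is, restriction along $\rho$ followed by sheafification. The point to observe is that, since $G$ and hence every $\calK(x)$ acts trivially on the constant ring $R$, the fixed-point sheaf $\frakF_R=\calR_{\C}$ is simply the constant sheaf on $\calC$ with value $R$, all of whose morphisms act as the identity on $R$. As $\rho$ is the identity on objects and the category extension $\PG\to\calC$ carries every morphism to one acting trivially on $R$, we get $\Res_{\rho}\calR_{\C}=\kappa_R$ as presheaves on $\PG$; by Proposition 3.5 this constant sheaf already lies in $\Sh(\pG)$, so sheafification is superfluous and $\Theta^{-1}\calR_{\C}=\kappa_R=\calR_{\pG}$. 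Once more this is an identification of sheaves of rings, so $\Theta^{\sharp}=\mathrm{id}$ works, and its adjoint transpose $\calR_{\C}\to\Theta_*\calR_{\pG}=RK_{\rho}(\kappa_R)=\frakF_R$ is again the identity.

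The computations are routine; the only place that calls for care — and the nearest thing to an obstacle — is verifying that these inverse-image identifications respect the \emph{ring} structure and not merely the underlying sheaf of sets, and that the chain $\calR_{\C}=\frakF_R$, $\Res_{\rho}\frakF_R=\kappa_R$ is correct on morphisms as well as on objects. Both points follow from the triviality of the $G$-action on the constant ring $R$ together with the principle, recalled at the end of Section 2, that the functors attached to a morphism of topoi restrict to categories of sheaves carrying algebraic structure. I would close by remarking that, under the equivalences of Theorem 3.8, all three structure sheaves correspond to the constant sheaf of rings $R$, so the morphisms of ringed topoi constructed above are in fact equivalences of ringed topoi.
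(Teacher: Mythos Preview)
Your proposal is correct and follows the paper's approach: the paper's proof is the single line ``These follow from Corollary 3.7,'' and you have simply unpacked the implicit verifications that $\Xi^{-1}\calR_{\pG}=\calR_{\GG}$ and $\Theta^{-1}\calR_{\C}=\calR_{\pG}$ hold on the nose as sheaves of rings. Your extra care about the ring structure and the behaviour on morphisms is appropriate and accurate, and your closing remark on equivalences of ringed topoi anticipates exactly what the paper establishes in Theorem 4.4.
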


\begin{proof} These follow from Corollary 3.7.
\end{proof}

\subsection{Modules on sites} Since we want to understand group representations through the equivalence
$$
\Sh(\C,R)\simeq\Sh(\GG,R)\cong\rMod{RG},
$$
we are keen to characterize finitely generated modules by sheaves.

\begin{definition} Let $(\C,\calR_{\C})$ be a ringed site.
\begin{enumerate}
\item A presheaf of $\calR_{\C}$-modules is given by an abelian presheaf $\frakF$ together with a map of presheaves of sets $\frakF \times \calR_{\C} \to \frakF$ such that, for every object $x$ of $\C$, the map $\frakF(x) \times \calR_{\C}(x) \to \frakF(x)$ defines
the structure of an $\calR_{\C}(x)$-module structure on the abelian group $\frakF(x)$.
\item A sheaf of $\calR_{\C}$-modules is a presheaf of $\calR_{\C}$-modules $\frakF$, such that the underlying presheaf of abelian groups $\frakF$ is a sheaf.
\item A morphism of sheaves of $\calR_{\C}$-modules is a morphism of presheaves of $\calR_{\C}$-modules.
\end{enumerate}
The category of sheaves of $\calR_{\C}$-modules is denoted $\rMod\calR_{\C}$. Given sheaves of $\calR_{\C}$-modules $\frakF$ and $\frakG$ we denote $\Hom_{\calR_{\C}}(\frakF, \frakG)$ the set of morphism of sheaves of $\calR_{\C}$-modules.
\end{definition}

Let $\C$ be a site and $R$ a commutative ring with identity. If $\calR_{\C}$ is the sheafification of the constant presheaf $\kappa_R$, then $\Sh(\C,R)=\rMod{\calR_{\C}}$, see for instance \cite[18.1]{KS}. We refer the reader to \cite[18.23.1]{St} for the definition of a coherent sheaf. The coherent objects of $\rMod{\calR_{\C}}$ form a subcategory $\coh(\calR_{\C})$. Now we are at the position to state results on $\rMod{\calR_{\pG}}$, $\rMod{\calR_{\C}}$ and $\rMod{\calR_G}$, as well as on the coherent objects. 

\begin{theorem} Suppose $\calP$ is a $G$-poset with an initial object and $\PG\to\calC$ is a category extension. We have an equivalence of ringed topoi
$$
(\Upsilon,\Upsilon^{\sharp}) : (\Sh(\GG),\calR_{\GG}) {\buildrel{\simeq}\over{\to}} (\Sh(\C),\calR_{\C}),
$$
where $\Upsilon=\Theta\Xi : \Sh(\GG)\to\Sh(\C)$ and $\Upsilon^{\sharp}:\Upsilon^{-1}(\calR_{\C})\to\calR_{\GG}$ is the identity. 
\begin{enumerate}
\item It induces an equivalence between module categories
$$
\rMod{\calR_{\C}} \simeq \rMod{\calR_{\GG}} \cong \rMod{RG}.
$$

\item If $R$ is noetherian, there is an equivalence for the categories of coherent sheaves
$$
\coh(\calR_{\C}) \simeq \coh(\calR_{\GG}) \cong \rmod{RG}.
$$
\end{enumerate}
\end{theorem}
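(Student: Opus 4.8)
The plan is to recognise $(\Upsilon,\Upsilon^{\sharp})$ as an equivalence of ringed topoi and then run the standard formalism of modules over a ringed topos. First I would verify the word ``equivalence'': by Theorem 3.8 the underlying functor $\Upsilon=\Theta\Xi\colon\Sh(\GG)\to\Sh(\C)$ is an equivalence of topoi, and by Lemma 4.2 the structure-sheaf comparison maps $\Xi^{\sharp}$ and $\Theta^{\sharp}$ are identities; composing morphisms of ringed topoi as in Definition 4.1 then identifies $\Upsilon^{-1}(\calR_{\C})$ with $\calR_{\GG}$ and makes $\Upsilon^{\sharp}$ the identity, in particular an isomorphism of sheaves of rings. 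A morphism of ringed topoi whose topos component is an equivalence of topoi and whose ring component is an isomorphism is itself an equivalence of ringed topoi; this is exactly the displayed equivalence in Theorem 4.4.

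For part (1), I would invoke that a morphism of ringed topoi induces an adjoint pair $\Upsilon^{*}\dashv\Upsilon_{*}$ between $\rMod{\calR_{\GG}}$ and $\rMod{\calR_{\C}}$ (see \cite[7.43]{St}, \cite[18.13]{St}, and the final remark of Section 2), with $\Upsilon^{*}\frakF=\Upsilon^{-1}\frakF\otimes_{\Upsilon^{-1}\calR_{\C}}\calR_{\GG}$. Since $\Upsilon^{\sharp}$ is the identity the tensor factor is trivial, so the underlying abelian-sheaf functors of $\Upsilon^{*}$ and $\Upsilon_{*}$ are the quasi-inverse equivalences $\Upsilon^{-1}$ and $\Upsilon_{*}$ of Theorem 3.8, now enriched with module structure; hence $\Upsilon^{*}$ and $\Upsilon_{*}$ are mutually inverse equivalences $\rMod{\calR_{\C}}\simeq\rMod{\calR_{\GG}}$. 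It then remains to identify the ends: $\rMod{\calR_{\C}}=\Sh(\C,R)$ and $\rMod{\calR_{\GG}}=\Sh(\GG,R)$ by \cite[18.1]{KS} (here $\calR_{\C}=\frakF_R$ and $\calR_{\GG}=\kappa_R$ are the sheafifications of the constant presheaf $R$), and $\Sh(\GG,R)\cong\rMod{RG}$ because on the one-object category $G$ with the trivial topology, and with $\calR_{\GG}(\bullet)=R$ carrying the trivial $G$-action, a sheaf of $\calR_{\GG}$-modules is precisely an $R$-module equipped with a commuting $G$-action. This gives (1).

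For part (2), the key observation is that coherence in the sense of \cite[18.23.1]{St} is intrinsic to the ringed topos: it is a condition on an object of $\rMod{\calR_{\C}}$ phrased entirely via finite free $\calR_{\C}$-modules, epimorphisms, kernels, and the local ``finite type'' property. Because $\Upsilon^{*}$ is an equivalence of abelian categories it is exact and sends $\calR_{\C}^{\oplus n}$ to $\calR_{\GG}^{\oplus n}$, so it preserves and reflects coherence, restricting to an equivalence $\coh(\calR_{\C})\simeq\coh(\calR_{\GG})$. Finally I would compute $\coh(\calR_{\GG})$ under $\rMod{\calR_{\GG}}\cong\rMod{RG}$: on the trivial one-object site a coherent sheaf of $\calR_{\GG}$-modules is exactly a coherent $RG$-module, and as $R$ is noetherian and $G$ finite the ring $RG$ is two-sided noetherian, hence coherent, so that for $RG$-modules coherent $=$ finitely presented $=$ finitely generated. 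Therefore $\coh(\calR_{\GG})\cong\rmod{RG}$, completing (2).

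I expect the only step that is not pure bookkeeping to be this last identification in part (2) --- checking that the abstract notion of a coherent sheaf collapses to ``finitely generated $RG$-module'' on the trivial topology on $G$ --- and even that reduces to the classical fact that $RG$ is noetherian (equivalently, for our purposes, coherent) when $R$ is noetherian and $G$ is finite. Everything else rests on Theorem 3.8, Lemma 4.2, and the general machinery of modules on ringed topoi.
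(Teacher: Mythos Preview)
Your proposal is correct and follows essentially the same approach as the paper's own proof: invoke Theorem~3.8 and Lemma~4.2 to obtain the ringed-topos equivalence via \cite[18.13]{St}, deduce the module-category equivalence from the induced pair $\Upsilon^{*}\dashv\Upsilon_{*}$, and for part~(2) use that coherence is intrinsic (\cite[18.18, 18.23]{St}) together with the noetherianity of $RG$ when $R$ is noetherian. Your write-up is in fact more explicit than the paper's (e.g.\ spelling out the tensor formula for $\Upsilon^{*}$ and the identification $\rMod{\calR_{\GG}}\cong\rMod{RG}$), but the structure of the argument is the same.
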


\begin{proof} Following \cite[18.13]{St}, $\Upsilon$ in Theorem 3.8 leads to an equivalence of ringed topoi $(\Upsilon,\Upsilon^{\sharp}) : (\Sh(\GG),\calR_{\GG}) {\buildrel{\simeq}\over{\to}} (\Sh(\C),\calR_{\C})$. It induces two functors 
$$
\Upsilon_* : \rMod{\calR_{\GG}} \to \rMod{\calR_{\C}}
$$
and
$$
\Upsilon^*=\Upsilon^{-1} : \rMod{\calR_{\C}} \to \rMod{\calR_{\GG}},
$$
which still form an equivalence. 

The second statement follow because being coherent is an intrinsic property (in the sense that it is preserved by equivalences of ringed topoi), see \cite[18.18]{St} (and \cite[18.23]{St} for the definition of a coherent module). When $R$ is noetherian, $RG$ is noetherian. The coherent objects in $\rMod{RG}$ are just the finitely generated $RG$-modules.
\end{proof}

Because of the equivalences $\rMod{\calR_{\C}}\simeq\rMod{RG}$ and $\coh(\calR_{\C})\simeq\rmod{RG}$, many classical results on group representations can be readily translated to its counterpart of modules on $\C$.

When $\C=\OG$, the second statement is comparable to the one in the theory of Mackey functors, which says that the category of group representations is embedded into the category of Mackey functors ${\rm Mack}_R(G)$ \cite[Section 6]{TW}. In fact, the latter embedding is also $M\mapsto\frakF_M$, regarded as the fixed-point Mackey functor on $G$.


\begin{thebibliography}{99}

\bibitem{Ar} Artin, M., \textit{Grothendieck Topologies}, Harvard Mathematics Department Seminar Notes (1962).
\bibitem{SGA4} Artin, M., Grothendieck, A., Verdier, J.-L., \textit{Th\'eorie des topos et cohomologie \'etale des schémas, Séminaire de G\'eom\'etrie Alg\'ebrique du Bois-Marie 1963-64}, Lecture Notes in Math. \textbf{269}, \textbf{270}, \textbf{305}, Springer-Verlag (1972 \& 1973). 
\bibitem{AKO} Aschbacher, M., Kessar, R., Oliver, B., \textit{Fusion Systems in Algebra and Topology}, LMS Lecture Notes \textbf{391}, Cambridge University Press (2011).
\bibitem{Ba} Balmer, P., \textit{Stacks of group representations}, J. Eur. Math. Soc. \textbf{17} (2015), 189-228. 
\bibitem{St} De Jong, A. J. et al, \textit{The Stacks Project}, open source reference available at: https://stacks.math.columbia.edu/download/book.pdf.
\bibitem{DH} Dwyer, W., Henn, H.-W., \textit{Advanced Course on Classifying Spaces and Cohomology of Groups}, Birkh\"auser (1998).
\bibitem{Jo} Johnstone, P., \textit{Sketches of an Elephant: a Topos Theory Compendium}, Oxford University Press (2002).
\bibitem{KS} Kashiwara, M., Schapira, P., \textit{Categories and Sheaves}, Springer (2006). 
\bibitem{Li1} Linckelmann, M., \textit{Fusion category algebras}, J. Algebra \textbf{277} (2004), 222–235.
\bibitem{MM} Mac Lane, S., Moerdijk, I., \textit{Sheaves in Geometry and Logic}, Springer (1992).
\bibitem{Mi} Milne, J., \textit{\'Etale Cohomology}, Princeton Math. Studies \textbf{33}, Princeton University Press (1980).
\bibitem{Sm} Smith, S. D., \textit{Subgroup Complexes}, AMS (2013).
\bibitem{Th} Th\'evenaz, J., \textit{$G$-Algebras and Modular Representation Theory}, Oxford University Press (1995).
\bibitem{TW} Th\'evenaz, J., Webb, P. J., \textit{Simple Mackey Functors}, in: \textit{Proc. 2nd Intern. Group Theory Conf.}, Suppl. Rend. Circ. Mate. Palermo \textbf{23} (1990), pp 299-319.
\bibitem{We1} Webb, P. J., \textit{A Guide to Mackey Functors}, in: \textit{Handbook of Algebra Vol. 2, Ed: M. Hazewinkel}, Elsevier (2000), pp 805-836.
\bibitem{We2} Webb, P. J., \textit{An introduction to the representations and cohomology of categories}, in: \textit{Group Representation Theory}, EPFL Press (2007), pp 149-173.
\bibitem{We3} Webb, P. J., \textit{Standard stratifications of EI categories and Alperin's weight conjecture}, J. Algebra \textbf{320} (2008), 4073-4091.
\bibitem{Xu1} Xu, F., \textit{On local categories of finite groups}, Math. Z. \textbf{272} (2012), 1023-1036. 
\bibitem{Xu4} Xu, F., \textit{Local representation theory of finite transporter categories}, J. Algebra \textbf{512} (2018), 119-159.
\end{thebibliography}
\end{document}